\newtheorem{remark}[theorem]{Remark}
\begin{document}

\bibliographystyle{plain}
\title{
Normal forms of triangles and quadrilaterals up to similarity}

\author{
Peteris\ Daugulis, Vija Vagale\thanks{Department of Mathematics,
Daugavpils University, Daugavpils, LV-5400, Latvia
(peteris.daugulis@du.lv). } }

\pagestyle{myheadings} \markboth{ P.\ Daugulis, V.Vagale }{Normal
forms of triangles} \maketitle

\begin{abstract} In this paper the problem of finding a normal form of triangles and plane
quadrilaterals up to similarity is considered. Several normal
forms for triangles and a normal form for quadrilaterals of
special case are described. Normal forms of simple plane objects
such as triangles can be used in mathematics teaching.
\end{abstract}

\begin{keywords}
normal forms, similarity, triangle
\end{keywords}

\begin{AMS}
51M04, 97G50, 97D99.
\end{AMS}

\section{Introduction}

Many problems of classical Euclidean geometry explicitly or
implicitly consider objects up to similarity. Understanding and
using similarity is an important geometry competence feature for
schoolchildren.

Recall that two geometric figures $A$ and $B$ are similar if $B$
can be obtained from $A$ after a finite composition of
translations, rotations, reflections and dilations (homotheties).
Similarity is an equivalence relation and thus, for example, the
set of all triangles in a plane is partitioned into similarity
equivalence classes which can be identifies with \sl similarity
types\rm\ of triangles.

In many areas of mathematics objects are studied up to equivalence
relations.  Depending on situation and traditions this is done
explicitly, implicitly or inadvertently. The problem of finding a
distinguished (\sl canonical, normal\rm) representatives of
equivalence classes of objects ir posed. Alternatively, it is the
problem of mapping the quotient set injectively back to the
original set. Let $X$ be a set with an equivalence relation $\sim$
or, equivalently, $R\subseteq X\times X$, denote the equivalence
class of $x\in X$ by $[x]$. Let $\pi:X\rightarrow X/R$ such that
$\pi(x)=[x]$ be the canonical projection map. We call a map
$\sigma: X/R \rightarrow X$ \sl normal object map\rm\ provided 1)
$\sigma$ is injective and 2) $\sigma\circ \pi=id_{X}$. For
example, there are various normal forms of matrices, such as the
Jordan normal form. See \cite{Sh} for examples of normal forms in
algebra and \cite{P} for a related recent work. Normal objects are
designed for educational, pure research (e.g. for classification)
and applied reasons. Normal objects are constructed as objects of
simple, minimalistic design, to show essential properties and
parameters of original objects. Often it is easier to solve a
problem for normal objects first and extend the solution to
atbitrary objects afterwards. Normal objects which are initially
designed for educational, pure research or problem solving
purposes are also used to optimize computations.

In elementary Euclidean geometry normal map approach does not seem
to be popular working with simple discrete objects such as
triangles. This may be related to the traditional dominance of the
synthetic geometry in school mathematics at the expense of the
coordinate/analytic approach. We can pose the problem of
introducing and using normal forms of triangles up to similarity.
This means to describe a set $S$ of mutually non-similar triangles
such that any triangle in the plane would be similar to a triangle
in $S$. We assume that Cartesian coordinates are introduced in the
plane, $S$ is designed using the Cartesian coordinates. For
triangle we offer three normal forms based on side lengths. Using
these normal forms the set of triangle similarity forms is
bijectively mapped to a fixed plane domain bounded by lines and
circles. For these forms two vertices are fixed and the third
vertex belongs to this finite domain, we call them \sl the one
vertex normal forms.\rm\ One vertex normal forms are also
generalized for quadrilaterals. Another normal form for triangles
is based on angles and circumscribed circles. For this form one
constant vertex is fixed on the unit circle and two other variable
vertices also belong to the unit circle, we call this form \sl the
circle normal form.\rm\

These normal forms may be useful in solving geometry problems
involving similarity and teaching geometry. The paper may be
useful for mathematics educators.

\section{Main results}

\subsection{Normal forms of triangles}

\subsubsection{Notations}

Consider $\mathbb{R}^{2}$ with a Cartesian system of coordinates
$(x,y)$ and center $O$. We think of classical triangles as being
encoded by their vertices. Strictly speaking by the triangle
$\triangle XYZ$ we mean the multiset $\{\{X,Y,Z\}\}$ of three
points in $\mathbb{R}^2$ each point having multiplicity at most
$2$. A triangle is called degenerate if points lie on a line.
Given $\triangle ABC$ we denote $\angle BAC=\alpha$, $\angle
ABC=\beta$, $\angle ACB=\gamma$, $|BC|=a$, $|AC|=b$, $|AB|=b$. We
exclude mutisets of type $\{\{XXX\}\}$.

We will use the following affine transformations of
$\mathbb{R}^{2}$: 1) translations, 2) rotations, 3) reflections
with respect to an axis, 4) dilations (given by the rule
$(x,y)\rightarrow (cx,cy)$ for some $c\in \mathbb{R}\backslash
\{0\}$). It is known that these transformations generate the \sl
dilation group\rm\ of $\mathbb{R}^{2}$, denoted by some authors as
$IG(2)$, see \cite{H}, \cite{P}. Two triangles $T_{1}$ and $T_{2}$
as similar if there exists $g\in IG(2)$, such that
$g(T_{1})=T_{2}$ (as multisets). If triangles $T_{1}$ and $T_{2}$
are similar, we write $T_{1}\sim T_{2}$ or $\triangle
X_{1}Y_{1}Z_{1}\sim \triangle X_{2}Y_{2}Z_{2}$.

The point $(x_{1},y_{1})$ is lexicographically smaller than the
point $(x_{2},y_{2})$ and denoted as $(x_{1},y_{1})\prec
(x_{2},y_{2})$ provided ($x_{1}<x_{2}$) or ($x_{1}=x_{2}$ and
$y_{1}<y_{2}$). The lexicographical order of points can be
extended to lexicographical ordering of sequences of points: the
sequence of points $[p_{1},p_{2}]$ is lexicographically smaller
than the sequence $[q_{1},q_{2}]$ denoted by $[p_{1},p_{2}]\prec
[q_{1},q_{2}]$ provided ($p_{1}\prec q_{1}$) or ($p_{1}=q_{1}$ and
$p_{2}\prec q_{2}$).

We use normal letters to denote fixed objects and $\backslash
mathcal$ letters to denote objects as function values.

\subsubsection{The $C$-vertex normal form}\label{1}

A normal form can be obtained by transforming the longest side of
the triangle into a unit interval of the $x$-axis. We call it \sl
the $C$-normal form.\rm\

In this subsection $A=(0,0)$ and $B=(1,0)$.

\begin{definition}

Let $S_{C}\subseteq \mathbb{R}^{2}$ be the domain in the first
quadrant bounded by the lines $y=0$, $x=\frac{1}{2}$ and the
circle $x^2+y^2=1$, see Figure 1.
\begin{center}
\epsfysize=70mm
    \epsfbox{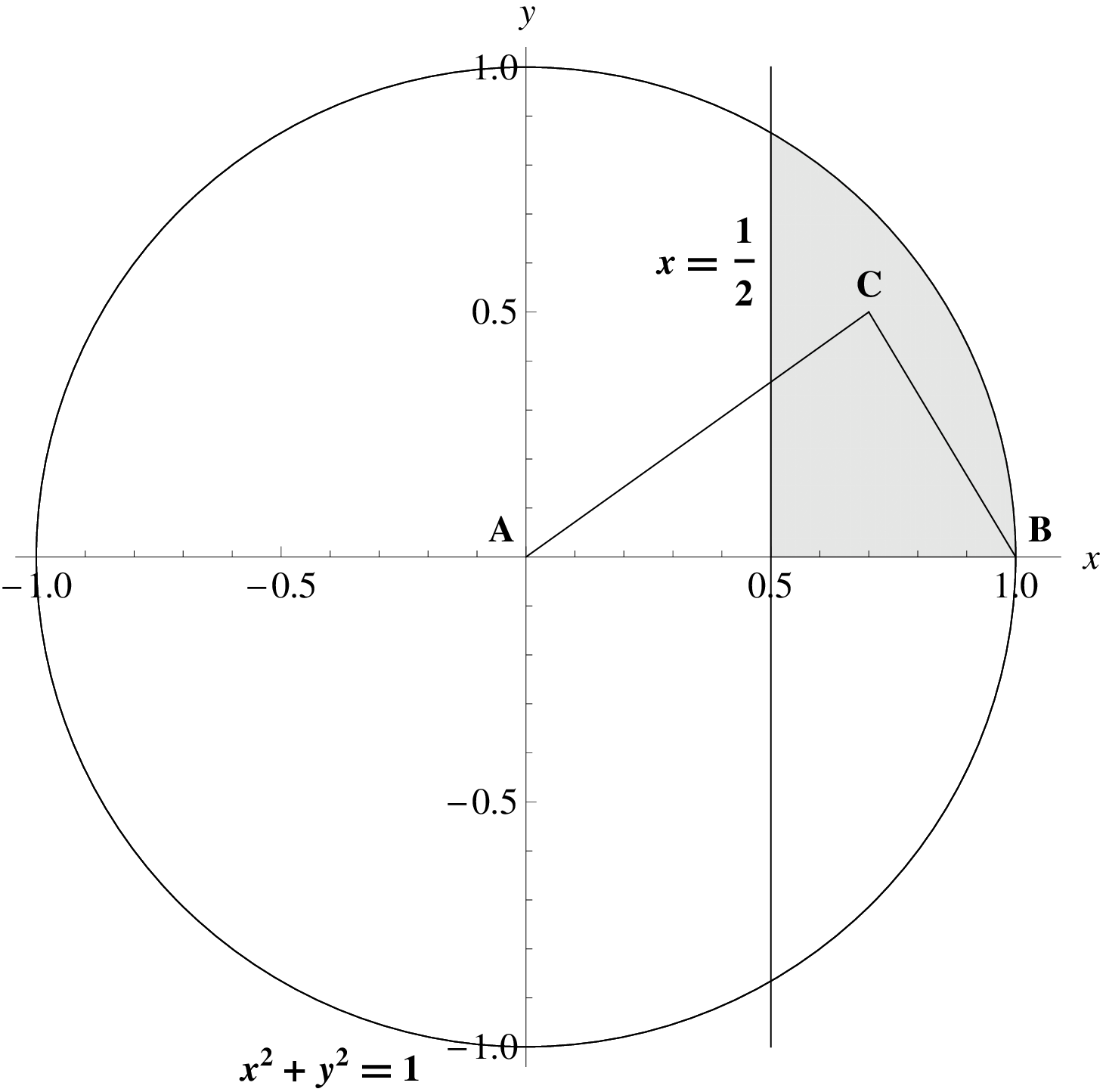}

Fig.1.  - the domain $S_{C}$.
\end{center}

In other terms, $S_{C}$ is the set of solutions of the system of
inequalities
$$
\left\{%
\begin{array}{ll}
    y\ge 0 \\
    x\ge \frac{1}{2}\\
    x^2+y^2\le 1. \\
\end{array}%
\right.
$$
\end{definition}

\begin{theorem} Every triangle $UVW$ (including degenerate triangles)
in $\mathbb{R}^{2}$ is similar to a triangle $AB\mathcal{C}$,
where $A=(0,0)$, $B=(1,0)$ and $\mathcal{C}\in S_{C}$.

\end{theorem}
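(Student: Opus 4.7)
The plan is to produce an explicit similarity from $\triangle UVW$ to a triangle of the stated form by choosing the longest side to play the role of $AB$ and orienting the remaining vertex appropriately.

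First I would identify a longest side of $\triangle UVW$; call its endpoints $U,V$ so that $|UW|\geq |VW|$ (relabel if necessary, which is permitted since a triangle is encoded as a multiset). Then I compose a similarity $g\in IG(2)$ as follows: translate by $-U$, rotate about $O$ by the angle that carries $V-U$ onto the positive $x$-axis, and dilate by $1/|UV|$. This $g$ sends $U\to A=(0,0)$ and $V\to B=(1,0)$, and carries $W$ to some point $\mathcal{C}=(x,y)$. If $y<0$, post-compose with reflection in the $x$-axis; this is again in $IG(2)$ and fixes both $A$ and $B$, so henceforth we may assume $y\geq 0$.

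Next I would verify that $\mathcal{C}\in S_C$. By construction $|A\mathcal{C}|=|UW|/|UV|$ and $|B\mathcal{C}|=|VW|/|UV|$. Since $UV$ is a longest side, both ratios are at most $1$, which yields $x^2+y^2\leq 1$. Since we arranged $|UW|\geq |VW|$, we also have $|A\mathcal{C}|\geq |B\mathcal{C}|$, i.e.\ $x^2+y^2\geq (x-1)^2+y^2$, which simplifies to $x\geq \frac{1}{2}$. Combined with $y\geq 0$, this places $\mathcal{C}$ in $S_C$.

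The degenerate case (when $U,V,W$ are collinear) is automatic: the longest-side choice forces $W$ to lie on the segment $UV$, so $\mathcal{C}$ lands on the $x$-axis between $(\frac{1}{2},0)$ and $(1,0)$, which belongs to $S_C$. The only genuinely delicate point --- and the closest thing to an obstacle --- is the double freedom of (i) labeling the endpoints of the longest side and (ii) possibly reflecting in the $x$-axis; together these exhaust the symmetries needed to land $\mathcal{C}$ in this particular quadrant of the lens $\{(x,y):x^2+y^2\leq 1,\ (x-1)^2+y^2\leq 1\}$ rather than in one of the other three congruent sub-regions.
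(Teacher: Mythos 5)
Your proof is correct and follows essentially the same route as the paper: translate and rotate the longest side onto the unit segment, dilate by the reciprocal of its length, and reflect into the upper half-plane, with the constraints $x^2+y^2\le 1$ and $x\ge\frac{1}{2}$ coming from the longest-side property exactly as in the paper. The only cosmetic difference is that you secure $x\ge\frac{1}{2}$ by labelling the endpoints so that $|UW|\ge|VW|$ in advance, whereas the paper achieves the same thing by a final reflection in the line $x=\frac{1}{2}$; these are equivalent since the triangle is a multiset.
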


\begin{proof} Let $\triangle UVW$ has side lengths $a,b,c$
satisfying $a\le b\le c$. Perform the following sequence of
transformations:
\begin{enumerate}

\item translate and rotate the triangle so that the longest side
is on the $x$-axis, one vertex has coordinates $(0,0)$ and another
vertex has coordinates $(c,0)$, $c>0$;

\item if the third vertex has negative $y$-coordinate, reflect the
triangle with respect to the $x$-axis;

\item do the dilation with coefficient $\frac{1}{c}$, note that
the vertices on the $x$-axis have coordinates $(0,0)$ and $(1,0)$,
the third vertex has coordinates $(x'_{C},y'_{C})$, where
$x'^2_{C}+y'^2_{C}\le 1$ and $(x'_{C}-1)^2+y'^2_{C}\le 1$;

\item if $x'_{C}<\frac{1}{2}$, then reflect the triangle with
respect to the line $x=\frac{1}{2}$, denote the third vertex by
$\mathcal{C}=(x_{C},y_{C})$, by construction we have that
$\mathcal{C}\in S_{C}$.

\end{enumerate}

The image of the initial triangle $\triangle UVW$ is the triangle
$AB\mathcal{C}$, where $\mathcal{C}\in S_{C}$. All transformations
preserve similarity type therefore $\triangle UVW\sim \triangle
AB\mathcal{C}$.
\end{proof}

\begin{theorem} If $C_{1}\in S_{C}$, $C_{2}\in S_{C}$ and $C_{1}\neq C_{2}$, then $\triangle ABC_{1}\not\sim\triangle ABC_{2}$.

\end{theorem}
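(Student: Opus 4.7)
My approach is to show that two distinct points of $S_C$ must yield triangles with distinct multisets of side lengths, so no similarity can identify them. The key geometric observation is that the defining inequalities of $S_C$ force $|AB|=1$ to be the longest side of $\triangle ABC$ and additionally impose $|BC|\le|AC|$. Indeed, $x^2+y^2\le 1$ gives $|AC|\le 1$, while the identity $|AC|^2-|BC|^2 = x^2-(x-1)^2 = 2x-1$ together with $x\ge \frac{1}{2}$ yields $|BC|\le|AC|$ (and in particular $|BC|\le 1$). So on $S_C$ one always has the ordering $|BC|\le|AC|\le|AB|=1$.

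Now suppose $\triangle ABC_1\sim\triangle ABC_2$ for some $C_1,C_2\in S_C$. A similarity rescales all sides by a common positive factor $k$, so the sorted multisets $\{|AC_1|,|BC_1|,1\}$ and $\{|AC_2|,|BC_2|,1\}$ are related by this factor. Since the maximum element of each equals $1$, necessarily $k=1$. Because both multisets are already sorted as $|BC_i|\le|AC_i|\le 1$ by the first paragraph, term-by-term comparison gives $|AC_1|=|AC_2|$ and $|BC_1|=|BC_2|$ unambiguously, with no separate case analysis needed for isoceles or degenerate configurations.

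Translating these equalities into coordinates yields
\[
x_1^2+y_1^2 \;=\; x_2^2+y_2^2, \qquad (x_1-1)^2+y_1^2 \;=\; (x_2-1)^2+y_2^2.
\]
Subtracting one from the other produces $2x_1-1=2x_2-1$, so $x_1=x_2$; substituting back gives $y_1^2=y_2^2$, and since $y_1,y_2\ge 0$ on $S_C$, we conclude $y_1=y_2$. Hence $C_1=C_2$, contradicting $C_1\ne C_2$. The only subtle step in this plan is the side-correspondence one: similarity guarantees only equality of sorted side-length multisets, not of vertex-labeled sides. The restriction $x\ge\frac{1}{2}$ in the definition of $S_C$ is what removes this ambiguity by forcing the strict-or-equal ordering $|BC|\le|AC|\le|AB|$ that makes the matching canonical.
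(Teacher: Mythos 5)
Your proof is correct, but it takes a different route from the paper. The paper argues with angles: it observes that $\angle C_iAB$ is the smallest angle of $\triangle ABC_i$ (being opposite the shortest side), splits into the cases $\angle C_1AB\neq\angle C_2AB$ (smallest angles of similar triangles must agree) and $\angle C_1AB=\angle C_2AB$ (then $C_1,C_2$ lie on one ray from $A$, so the angles at $B$ differ), and concludes from the correspondence of angles under similarity. You instead work with the sorted multiset of pairwise distances: the defining inequalities of $S_C$ force $|BC|\le|AC|\le|AB|=1$, the common maximum $1$ pins the similarity ratio to $k=1$, the sorted orderings make the side correspondence canonical, and the two resulting coordinate equations recover $C_1=C_2$. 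Your version buys two things. First, it handles degenerate configurations (collinear points, or $\mathcal{C}=B$) on exactly the same footing as nondegenerate ones, whereas the paper's appeal to ``equality of angles for similar triangles'' is delicate when angles degenerate to $0$ or $\pi$. Second, you make explicit the point the paper leaves implicit in its first case: one must justify that the compared sides (or angles) actually correspond under an arbitrary similarity, and your observation that $x\ge\frac{1}{2}$ and $x^2+y^2\le 1$ impose the ordering $|BC|\le|AC|\le|AB|$ is precisely that justification. The paper's argument is shorter and more synthetic; yours is more robust and self-contained.
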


\begin{proof} If $\angle C_{1}AB=\angle C_{2}AB$ and $C_{1}\neq C_{2}$, then $\angle C_{1}BA\neq \angle
C_{2}BA$. By equality of angles for similar triangles it follows
that $\triangle ABC_{1}\not\simeq\triangle ABC_{2}$.

Let $\angle C_{1}AB\neq \angle C_{2}AB$. The angle $C_{i}AB$ is
the smallest angle in $\triangle ABC_{i}$. By equality of angles
for similar triangles it again follows that $\triangle
ABC_{1}\not\simeq\triangle ABC_{2}$.

\end{proof}

\begin{definition} A point $\mathcal{C}\in S_{C}$ such that $\triangle AB\mathcal{C}\sim \triangle
UVW$ is called \sl the $C$-normal point\rm\ of $\triangle UVW$.

\end{definition}

\begin{definition} The $C$-vertex normal form of $\triangle UVW$ is
$\triangle AB\mathcal{C}$, where $\mathcal{C}\in S_{C}$ is the
$C$-normal point of $\triangle UVW$.

\end{definition}

\begin{remark} Denote by $R_{C}$ the intersection of the circle
$(x-\frac{1}{2})^2+y^2=(\frac{1}{2})^2$ and $S_{C}$. Points of
$R_{C}$ correspond to right angle triangles. Points below and
above $R_{C}$ correspond to, respectively, obtuse and acute
triangles, see Figure 2.

Points on the intersection of the line $x=\frac{1}{2}$ and $S_{C}$
correspond to isosceles obtuse triangles. Points on the
intersection of the circle $x^2+y^2=1$ and $S_{C}$ correspond to
isosceles acute triangles. Points in the interior of $S_{C}$
correspond to scalene triangles. The point
$(\frac{1}{2},\frac{\sqrt{3}}{2})$ corresponds to the equilateral
triangle. Points on the intersection of the line $y=0$ and $S_{C}$
correspond to degenerate triangles. $\mathcal{C}=B$ for triangles
having side lengths $0,c,c$.
\end{remark}

\begin{remark} A similar normal form can be obtained reflecting
$S_{C}$ with respect to the line $x=\frac{1}{2}$.

\end{remark}

\begin{center}
    \epsfysize=60mm
    \epsfbox{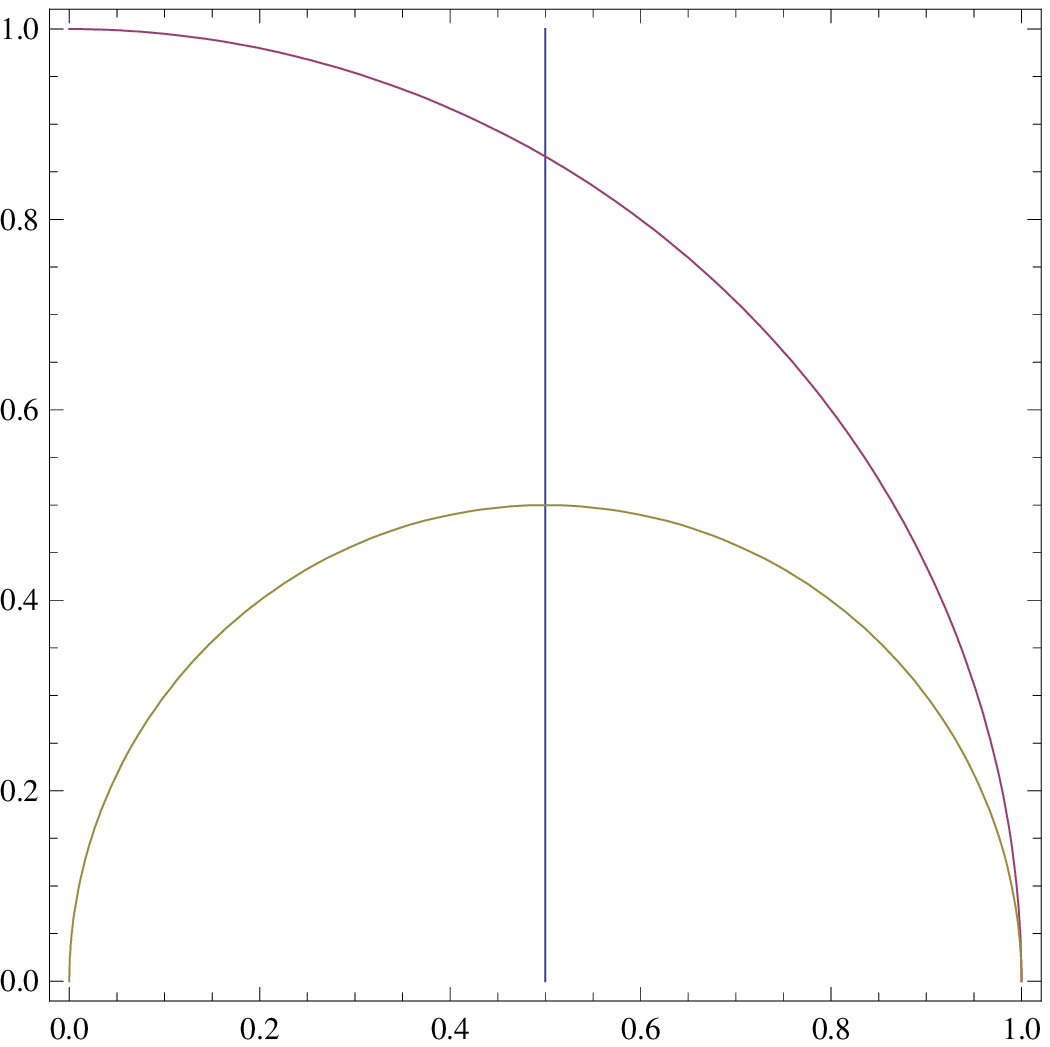}

Fig.2.  - the subdomains of $S_{C}$ corresponding to obtuse and
acute triangles.
    \end{center}

\subsubsection{The $B$-vertex normal form}

Another normal form can be obtained by transforming the median
length side (in the sense of ordering) of the triangle into a unit
interval of the $x$-axis. By analogy it is called \sl the
$B$-normal form.\rm\

In this subsection $A=(0,0)$ and $C=(1,0)$.

\begin{definition}

Let $S_{B}\subseteq \mathbb{R}^{2}$ be the domain in the first
quadrant bounded by the line $y=0$ and the circles $x^2+y^2=1$ and
$(x-1)^2+y^2=1$, see Figure 3.
\begin{center}

\epsfysize=60mm
    \epsfbox{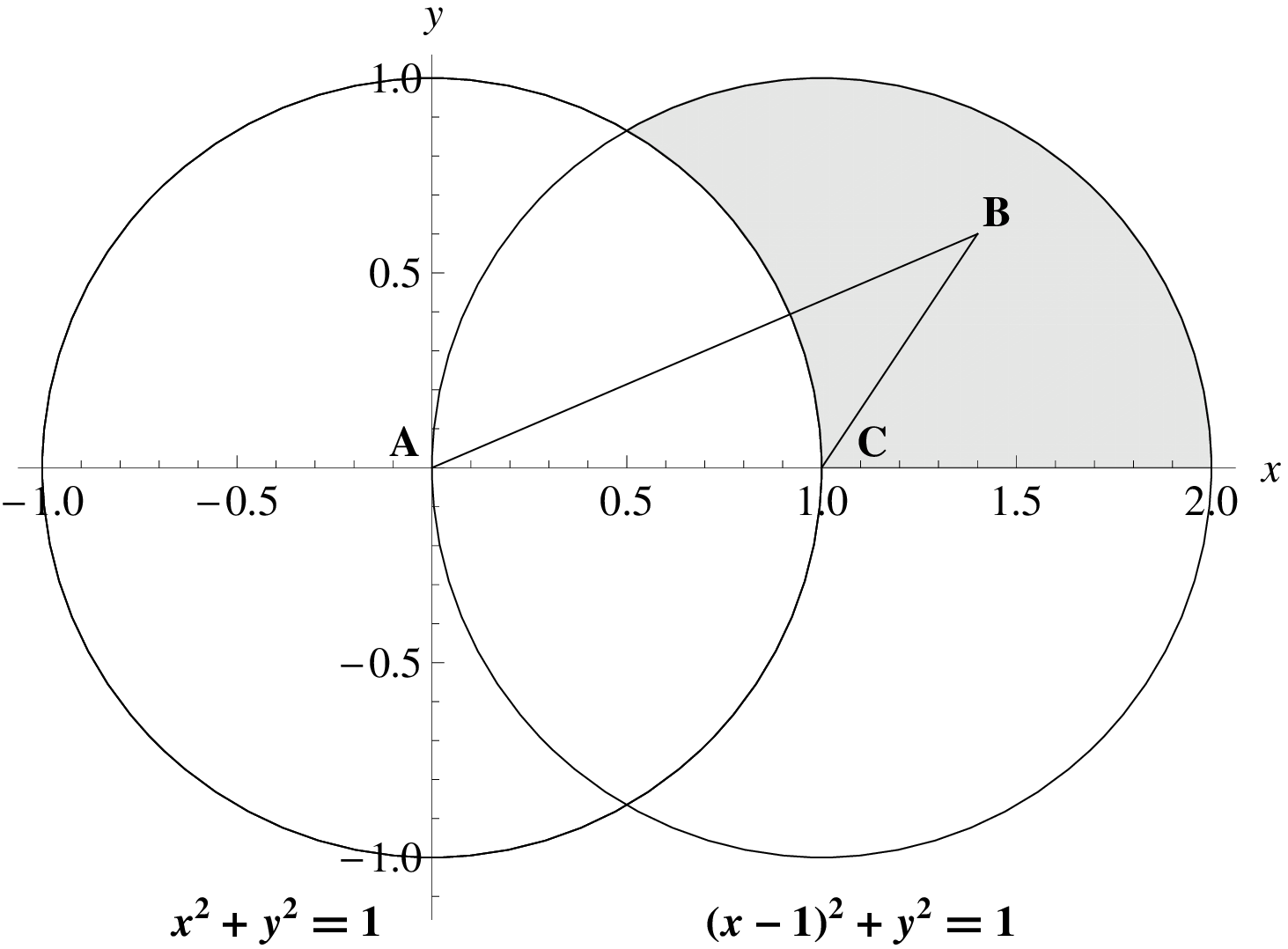}

Fig.3.  - the domain $S_{B}$.
\end{center}

In other terms, $S_{B}$ is the set of solutions of the system of
inequalities
$$
\left\{%
\begin{array}{ll}
    y\ge 0 \\
    x^2+y^2\ge 1\\
    (x-1)^2+y^2\le 1.\\
\end{array}%
\right.
$$
\end{definition}

\begin{theorem} Every triangle $UVW$ (including degenerate triangles)
in $\mathbb{R}^{2}$ is similar to a triangle $A\mathcal{B}C$,
where $A=(0,0)$, $C=(1,0)$ and $\mathcal{B}\in S_{B}$.

\end{theorem}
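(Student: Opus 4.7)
The plan is to mirror the four-step argument used for Theorem~2 (the $C$-vertex normal form), but placing the \emph{middle-length} side of $\triangle UVW$ on the segment from $(0,0)$ to $(1,0)$ instead of the longest side. Order the side lengths as $a\le b\le c$, where $b$ is the median length, and recall that after placing a side of length $b$ along $AC$ with $|AC|=1$, the remaining vertex will lie at distance $a/b\le 1$ from one endpoint and $c/b\ge 1$ from the other.

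First I would translate and rotate $\triangle UVW$ so that the middle-length side of length $b$ lies on the $x$-axis with endpoints $(0,0)$ and $(b,0)$. Second, if the third vertex has negative $y$-coordinate, reflect across the $x$-axis. Third, apply the dilation with coefficient $1/b$, producing a triangle with vertices $(0,0)$, $(1,0)$ and a third vertex $(x',y')$ with $y'\ge 0$. By construction the two remaining side lengths are $a/b$ and $c/b$, so $(x')^2+(y')^2$ and $(x'-1)^2+(y')^2$ are $(a/b)^2$ and $(c/b)^2$ in some order; in particular one of them lies in $[0,1]$ and the other in $[1,\infty)$. Fourth, if $(x')^2+(y')^2<1$ (so the longer of the two remaining sides emanates from $(1,0)$ rather than from $(0,0)$), reflect the triangle across the line $x=\tfrac12$; this swaps the two remaining sides, ensuring
\[
x^2+y^2\ge 1 \quad\text{and}\quad (x-1)^2+y^2\le 1,
\]
where $(x,y)$ denotes the final position of the third vertex. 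Define $\mathcal{B}=(x,y)$. By construction $\mathcal{B}\in S_B$.

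Every step used is either a translation, rotation, reflection, or dilation, so each belongs to $IG(2)$ and preserves similarity type. Therefore the final triangle $A\mathcal{B}C$ is similar to $\triangle UVW$, as desired.

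The only step requiring any care is the fourth, namely verifying that when the dilation produces the third vertex inside the unit disk about $(0,0)$, the reflection across $x=\tfrac12$ sends it into $S_B$ (and does nothing incorrect when already in $S_B$). This is immediate since reflection across $x=\tfrac12$ interchanges the roles of the two endpoints $A$ and $C$, hence swaps the two distance conditions $x^2+y^2\lessgtr 1$ and $(x-1)^2+y^2\gtrless 1$, while preserving $y\ge 0$. No genuine obstacle arises; the argument is parallel to that of Theorem~2 with the role of ``longest side'' replaced by ``median side.''
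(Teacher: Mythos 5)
Your proof follows the same route as the paper's: place the median-length side on the segment from $(0,0)$ to $(1,0)$, reflect into the upper half-plane, dilate by $1/b$, and then use the reflection in the line $x=\frac{1}{2}$ to control which endpoint the longest remaining side emanates from. The one point where you deviate is the trigger for that final reflection, and your version has an edge-case error. You reflect only when $(x')^2+(y')^2<1$ strictly, and you gloss this as ``the longer of the two remaining sides emanates from $(1,0)$''; these are not equivalent when $a=b<c$. For such a triangle the third vertex can land at distance $a/b=1$ from $(0,0)$ and distance $c/b>1$ from $(1,0)$: your test does not fire, no reflection occurs, and the final vertex violates $(x-1)^2+y^2\le 1$, so it is not in $S_B$. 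The correct test is ``reflect whenever the third vertex is strictly closer to $(0,0)$ than to $(1,0)$,'' i.e.\ whenever $x'<\frac{1}{2}$ (this is the condition the paper uses); with that one-line change your argument goes through. Everything else --- each step lies in $IG(2)$ and so preserves the similarity type, and when the reflection does fire it correctly swaps the two distance conditions --- is fine.
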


\begin{proof} Let $\triangle UVW$ has side lengths $a,b,c$
satisfying $a\le b\le c$. Perform the following sequence of
transformations:
\begin{enumerate}

\item translate and rotate the triangle so that the side of length
$b$ is on the $x$-axis, one vertex has coordinates $(b,0)$ and
another vertex has coordinates $(b,0)$, $b>0$, the side of length
$c$ is incident to the vertex $(c,0)$;

\item if the third vertex has negative $y$-coordinate, reflect the
triangle with respect to $x$-axis;

\item do the dilation with coefficient $\frac{1}{b}$, note that
the vertices on the $x$-axis have coordinates $(0,0)$ and $(1,0)$,
at this point the third vertex $\mathcal{B}$ has coordinates
$(x'_{B},y'_{B})$, where $y'_{B}\ge 0$, $x'^2_{B}+y'^2_{B}\ge 1$
or $(x'_{B}-1)^2+y'^2_{B}\le 1$;

\item if $x'_{B}<\frac{1}{2}$, reflect the triangle with respect
to the line $x=\frac{1}{2}$, now the third vertex $\mathcal{B}$
has new coordinates $(x_{B},y_{B})$, where $x_{B}\ge \frac{1}{2}$,
$y_{B}\ge 0$, $(x_{B}-1)^2+y^2_{B}\le 1$.
\end{enumerate}

The image of the initial triangle $\triangle UVW$ is the triangle
$A\mathcal{B}C$, where $\mathcal{B}\in S_{B}$. All transformations
preserve similarity type therefore $\triangle UVW\sim \triangle
A\mathcal{B}C$.
\end{proof}

\begin{theorem} If $B_{1}=(x_{i},y_{i})\in S_{B}$, $B_{2}=(x_{2},y_{2})\in S_{B}$ and $B_{1}\neq B_{2}$, then $\triangle
AB_{1}C\not\sim\triangle AB_{2}C$.

\end{theorem}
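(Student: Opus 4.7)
The plan is to exploit the fact that, with $A=(0,0)$ and $C=(1,0)$ fixed, the side $AC$ has length exactly $1$; and moreover, for any $B_i\in S_B$ the defining inequalities of $S_B$ give $|B_iC|\le 1\le |AB_i|$, so $|AC|$ is the median side length of each of the two triangles under consideration. This is the structural fact that will force the two triangles to match rigidly.

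The first step is to use the SSS similarity criterion in its sorted form: two triangles are similar if and only if, after sorting their side lengths in increasing order, the resulting triples are proportional. For $\triangle AB_iC$ the sorted triple is $\bigl(|B_iC|,\,1,\,|AB_i|\bigr)$. Assuming $\triangle AB_1C\sim\triangle AB_2C$, the proportionality constant is forced to equal $1$ by the coincidence of the middle entries, hence $|AB_1|=|AB_2|$ and $|B_1C|=|B_2C|$.

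The second step is then purely geometric: the two equalities just derived say that $B_1$ and $B_2$ both lie on the intersection of the circle of radius $|AB_1|$ about $A$ and the circle of radius $|B_1C|$ about $C$. Since both centres sit on the $x$-axis, this intersection consists of at most two points related by reflection across the $x$-axis. The condition $y\ge 0$ built into $S_B$ leaves at most one of these candidates inside $S_B$, and so $B_1=B_2$, contradicting the hypothesis $B_1\neq B_2$.

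The only genuine subtlety I see is to check that isosceles or equilateral cases do not spoil the sorted-triple comparison, but since the middle entry of each sorted triple equals $1$ regardless of what repetitions occur, the proportionality constant is still pinned down to $1$ and both side-length equalities survive. So I expect no serious obstacle; the argument is essentially an immediate consequence of the clever positioning of $AC$ as the median side.
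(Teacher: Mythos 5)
Your proof is correct, and it takes a genuinely different route from the paper's. The paper argues with angles: it observes that $\angle B_iAC$ is the smallest angle of $\triangle AB_iC$ (being opposite the shortest side $B_iC$) and $\angle AB_iC$ the largest, and then splits into the cases $\angle B_1AC\neq\angle B_2AC$ and $\angle B_1AC=\angle B_2AC$, using equality of corresponding (sorted) angles for similar triangles. You instead work with side lengths: the inequalities $x^2+y^2\ge 1$ and $(x-1)^2+y^2\le 1$ defining $S_B$ say exactly that $|B_iC|\le |AC|=1\le |AB_i|$, so the common side $AC$ sits in the middle of both sorted distance triples, which pins the similarity ratio to $1$; then the two-circle intersection argument together with $y\ge 0$ forces $B_1=B_2$. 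The two arguments are essentially dual (sorted sides versus sorted angles), but yours has a concrete advantage: it only uses the fact that a similarity scales the multiset of pairwise distances uniformly, so it applies verbatim to the degenerate triangles (collinear or repeated points) that the existence theorem explicitly includes, whereas the paper's talk of "smallest" and "largest" angles becomes delicate when angles degenerate to $0$ or $\pi$. The one point worth stating explicitly in a final write-up is the direction of the SSS claim you actually need — only that similarity implies proportional sorted triples, which is immediate — since the converse (which you also assert) is not used and would require a separate check in the degenerate cases.
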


\begin{proof} The angle $\angle B_{i}AC$ is
the smallest angle in the triangle $\triangle AB_{i}C$.

If $\angle B_{1}AC\neq \angle B_{2}AC$, then, since these are
smallest angles in the triangles, it follows that $\triangle
AB_{1}C\not\sim \triangle AB_{2}C$.

If $\angle B_{1}AC=\angle B_{2}AC$ and $B_{1}\neq B_{2}$, then
$\angle AB_{1}C\neq \angle AB_{2}C$. $\angle AB_{i}C$ is the
biggest angle in $\triangle AB_{i}C$, therefore $\angle
AB_{1}C\neq \angle AB_{2}C$ implies $\triangle AB_{1}C\not\sim
AB_{2}C$.

\end{proof}

\begin{definition} A point $\mathcal{B}\in S$ such that $\triangle A\mathcal{B}C\sim \triangle
UVW$ is called \sl the $B$-normal point\rm\ of $\triangle UVW$.

\end{definition}

\begin{definition} The $B$-vertex normal form of $\triangle UVW$ is
$\triangle A\mathcal{B}C$, where $\mathcal{B}\in S$ is the
$B$-normal point of $\triangle UVW$.

\end{definition}

\begin{remark} Denote by $R_{B}$ the intersection of the ray
$x=1$, $x\ge 0$ and $S_{B}$. Points of $R_{B}$ correspond to right
angle triangles. Points to the right and left of $R_{B}$
correspond to, respectively, obtuse and acute triangles, see
Figure 4.

\begin{center}
    \epsfysize=60mm
    \epsfbox{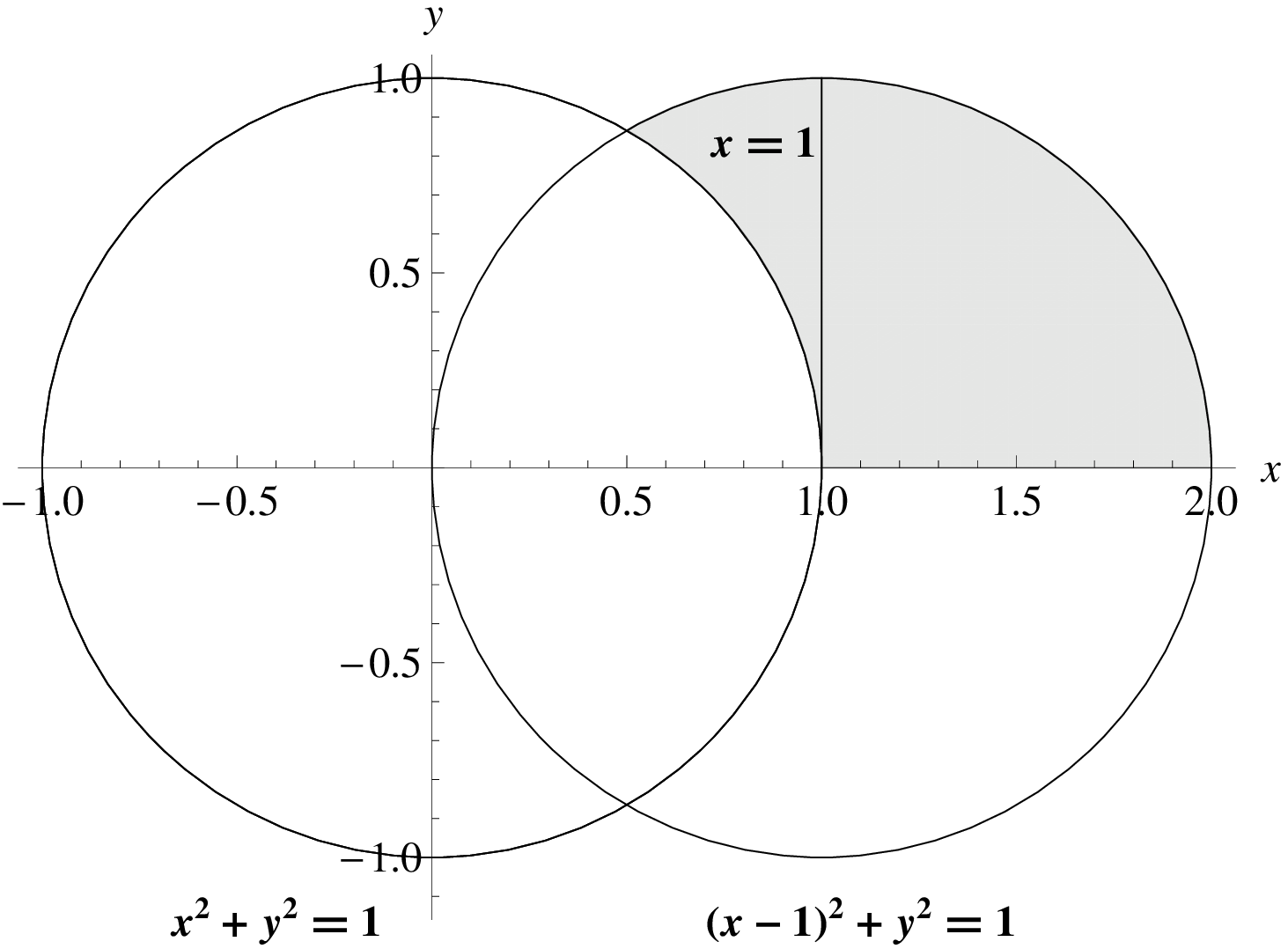}

Fig.4.  - the subdomains of $S_{B}$ corresponding to obtuse and
acute triangles.
    \end{center}

Points on the intersection of the line $x^2+y^2=1$ and $S_{B}$
correspond to isosceles acute triangles. Points on the
intersection of the circle $(x-1)^2+y^2=1$ and $S_{B}$ correspond
to isosceles obtuse triangles. Points in the interior of $S_{B}$
correspond to scalene triangles. The point
$(\frac{1}{2},\frac{\sqrt{3}}{2})$ corresponds to the equilateral
triangle. Points on the intersection of the line $y=0$ and $S_{B}$
correspond to degenerate triangles. $\mathcal{B}=C$ for triangles
having side lengths $0,c,c$.

\end{remark}

\subsubsection{$A$-vertex normal form}

Finally a normal form can be obtained by transforming the shortest
side of the triangle into a unit interval of the $x$-axis. By
analogy it is called \sl the $A$-normal form.\rm\ In this case
again two vertices on the $x$-axis are $(0,0)$ and $(1,0)$, the
domain $S_{A}$ of possible positions of the third vertex is
unbounded.

In this subsection $B=(0,0)$ and $C=(1,0)$.

\begin{definition}

Let $S_{A}\subseteq \mathbb{R}^{2}$ be the unbounded domain in the
first quadrant bounded by the lines $y=0$, $x=\frac{1}{2}$ and the
circle $(x-1)^2+y^2=1$, see Figure 5.

In other terms, $S_{A}$ is the set of solutions of the system of
inequalities
$$
\left\{%
\begin{array}{ll}
    y\ge 0 \\
    x\ge \frac{1}{2}\\
    (x-1)^2+y^2\ge 1.\\
\end{array}%
\right.
$$
\end{definition}

\begin{center}
\epsfysize=60mm
    \epsfbox{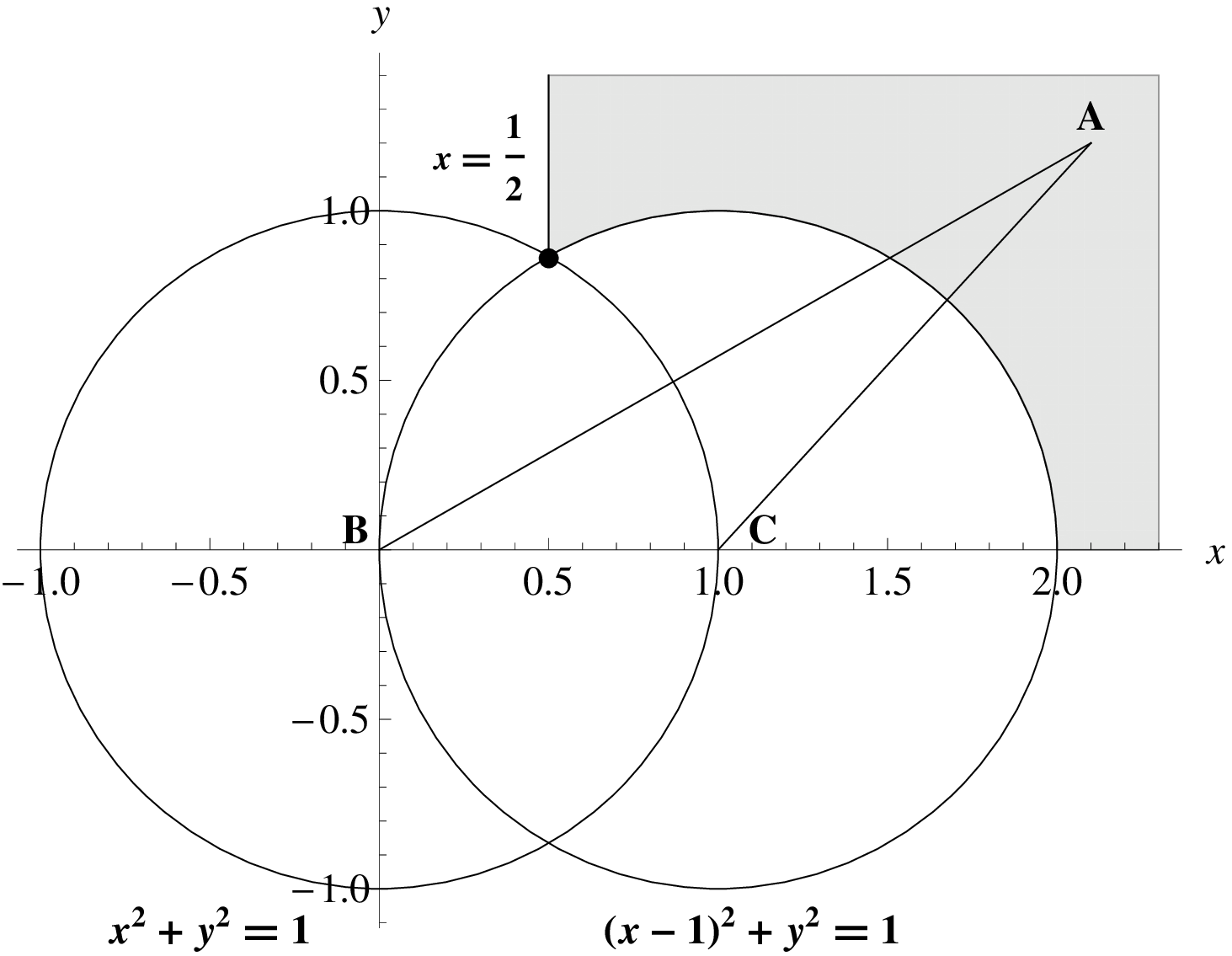}

Fig.5.  - the domain $S_{A}$.
\end{center}

\begin{theorem} Every triangle $UVW$ (including degenerate
triangles but excluding the similarity type having side lengths
$0,c,c$) in $\mathbb{R}^{2}$ is similar to a triangle
$\mathcal{A}BC$, where $B=(0,0)$, $C=(1,0)$ and $\mathcal{A}\in
S_{A}$.

\end{theorem}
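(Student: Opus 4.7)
The plan is to mirror the proofs of the two earlier existence theorems. Starting from an arbitrary triangle $\triangle UVW$ with side lengths $a \le b \le c$, I note that the exclusion of the similarity type $0,c,c$ is exactly the condition $a > 0$, so a dilation by $1/a$ will be legitimate. First I would translate and rotate so that the shortest side (length $a$) lies on the $x$-axis with endpoints $(0,0)$ and $(a,0)$, then, if the third vertex has negative $y$-coordinate, reflect across the $x$-axis.

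Next I would apply the dilation $(x,y)\mapsto (x/a, y/a)$. This sends the two chosen vertices to $B=(0,0)$ and $C=(1,0)$ and the third vertex to some $(x',y')$ with $y'\ge 0$. Because the original distances from the third vertex to the other two are $b$ and $c$ with $b,c\ge a$, after dilation we obtain $x'^2+y'^2=(c/a)^2\ge 1$ and $(x'-1)^2+y'^2=(b/a)^2\ge 1$. Only the condition $x\ge \tfrac{1}{2}$ may then fail; if $x'<\tfrac{1}{2}$, I reflect across the line $x=\tfrac{1}{2}$ via $(x,y)\mapsto(1-x,y)$. This map fixes $\{B,C\}$ as a set and swaps the two exterior-of-circle conditions, so both continue to hold, and now the first coordinate is $\ge \tfrac{1}{2}$.

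The only subtle step, and the one I expect to call out explicitly, is verifying that the resulting point $\mathcal{A}=(x_A,y_A)$ lies in $S_A$ even though the defining system of $S_A$ drops the inequality $x_A^2+y_A^2\ge 1$. This redundancy I would check directly: from $(x_A-1)^2+y_A^2\ge 1$ one gets $y_A^2\ge 2x_A-x_A^2$, whence
$$
x_A^2+y_A^2 \;\ge\; x_A^2+(2x_A-x_A^2) \;=\; 2x_A \;\ge\; 1,
$$
using $x_A\ge\tfrac{1}{2}$ in the last step. Hence all three defining inequalities of $S_A$ hold, so $\mathcal{A}\in S_A$.

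Finally, since each transformation used (translation, rotation, reflection, dilation) belongs to $IG(2)$, the constructed triangle $\triangle \mathcal{A}BC$ is similar to $\triangle UVW$, completing the proof. The main obstacle is genuinely the last bookkeeping step above; the rest is routine in the style of the $C$- and $B$-vertex cases.
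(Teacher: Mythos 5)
Your proof is correct and follows essentially the same route as the paper's: place the shortest side on the $x$-axis, normalize by reflection across the $x$-axis, dilate by $1/a$, and reflect across $x=\tfrac{1}{2}$ if needed. Your added observation that $x_A^2+y_A^2\ge 1$ is automatically implied by $x_A\ge\tfrac12$ and $(x_A-1)^2+y_A^2\ge 1$ is a correct (if not strictly necessary) refinement that the paper omits, and your membership check is in fact more carefully stated than the paper's, which contains a sign slip ($\le 1$ where $\ge 1$ is meant) in its final step.
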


\begin{proof} Let $\triangle UVW$ has side lengths $a,b,c$
satisfying $a\le b\le c$. Perform the following sequence of
transformations:
\begin{enumerate}

\item translate and rotate the triangle so that the side of length
$a$ is on the $x$-axis, one vertex has coordinates $(0,0)$ and
another vertex has coordinates $(a,0)$, $a>0$, the side of length
$c$ is incident to the vertex $(0,0)$;

\item if the third vertex has negative $y$-coordinate, reflect the
triangle with respect to the $x$-axis;

\item do the dilation with coefficient $\frac{1}{a}$, note that
the vertices on the $x$-axis have coordinates $(0,0)$ and $(1,0)$;

\item if the third point has the $x$-coordinate less than
$\frac{1}{2}$, reflect the triangle with respect to the line
$x=\frac{1}{2}$, now the third vertex $\mathcal{A}$ has
coordinates $(x_{0},y_{0})$, where $x_{0}\ge \frac{1}{2}$,
$y_{0}\ge 0$, $(x_{0}-1)^2+y^2_{0}\le 1$.

\end{enumerate}

The image of the initial triangle $\triangle UVW$ is the triangle
$\mathcal{A}BC$, where $\mathcal{A}\in S_{A}$. All transformations
preserve similarity type therefore $\triangle UVW\sim \triangle
\mathcal{A}BC$.
\end{proof}

\begin{theorem} Let $B=(0,0)$, $C=(1,0)$. If $A_{1}=(x_{i},y_{i})\in S_{A}$, $A_{2}=(x_{2},y_{2})\in S_{A}$ and $A_{1}\neq A_{2}$, then $\triangle
A_{1}BC\not\sim\triangle A_{2}BC$.

\end{theorem}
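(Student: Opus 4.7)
The plan is to mirror the uniqueness argument given for the $B$-vertex normal form, after first recording the side-length ordering that membership in $S_{A}$ forces. For any $A=(x,y)\in S_{A}$, the constraint $(x-1)^{2}+y^{2}\ge 1$ gives $|CA|\ge 1=|BC|$, while $x\ge \frac{1}{2}$ rewrites as $|AB|^{2}-|CA|^{2}=2x-1\ge 0$; hence $|BC|\le |CA|\le |AB|$, so in every triangle $\triangle ABC$ with $A\in S_{A}$ the angle $\angle BAC$ is the smallest and $\angle BCA$ the largest.

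Given distinct $A_{1},A_{2}\in S_{A}$, I will split into two cases exactly as in the preceding theorem. If $\angle BA_{1}C\neq \angle BA_{2}C$, then the smallest angles of the two triangles differ and therefore $\triangle A_{1}BC\not\sim \triangle A_{2}BC$. If instead $\angle BA_{1}C=\angle BA_{2}C$, I will claim $\angle BCA_{1}\neq \angle BCA_{2}$, so that the largest angles differ and similarity again fails. To establish the claim, suppose for contradiction that all three angles of the two triangles coincide: then $A_{1}$ and $A_{2}$ lie on a common ray from $B$ in the closed upper half-plane (determined by the common angle at $B$) and on a common ray from $C$ in the closed upper half-plane (determined by the common angle at $C$). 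Since $\alpha+\beta+\gamma=\pi$ with $\alpha>0$ in the non-degenerate case, these two rays have distinct endpoints and non-parallel directions, so they meet in at most one point, contradicting $A_{1}\neq A_{2}$.

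The main subtlety to address is the degenerate stratum $y=0$ of $S_{A}$, on which the two rays above collapse onto the $x$-axis and the angle argument breaks down. There I will argue directly with side lengths: any degenerate $A_{i}\in S_{A}$ satisfies $x_{i}\ge 2$, giving ordered side-length triple $(1,x_{i}-1,x_{i})$; distinct values of $x_{i}$ yield non-proportional triples and hence mutually non-similar degenerate configurations, while a degenerate $A_{i}$ cannot be similar to a non-degenerate one since one angle multiset contains $0$ and the other does not.
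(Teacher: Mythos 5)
Your argument is correct and follows the same route as the paper's own proof: a case split on whether the smallest angles $\angle BA_{i}C$ agree, falling back on the largest angles $\angle BCA_{i}$ when they do. You additionally supply three justifications the paper leaves implicit --- the side-length ordering $|BC|\le |CA|\le |AB|$ forced by membership in $S_{A}$ (from $(x-1)^2+y^2\ge 1$ and $x\ge \frac{1}{2}$), the two-rays argument showing that equal angle data at $B$ and $C$ pins down the third vertex, and the separate treatment of the degenerate stratum $y=0$ (where $x_{i}\ge 2$ and the angle argument does not apply) --- so your version is the more complete one.
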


\begin{proof} The angle $\angle BCA_{i}$ is
the largest angle in the triangle $\triangle A_{i}BC$.

If $\angle BCA_{1}\neq \angle BCA_{2}$, then since these are
largest angles in the triangles it follows that $\triangle
A_{1}BC\not\sim \triangle A_{2}BC$.

If $\angle BCA_{1}=\angle BCA_{2}$ and $A_{1}\neq A_{2}$, then
$\angle BA_{1}C\neq \angle BA_{2}C$. $BA_{i}C$ is the smallest
angle in $\triangle A_{i}BC$, therefore $A_{1}BC\neq A_{2}BC$
implies $\triangle AB_{1}C\not\sim AB_{2}C$.

\end{proof}

\begin{definition} A point $\mathcal{A}\in S_{A}$ such that $\triangle \mathcal{A}BC\sim \triangle
UVW$ is called \sl the $A$-normal point\rm\ of $\triangle UVW$.

\end{definition}

\begin{definition} The $A$-vertex normal form of $\triangle UVW$ is
$\triangle \mathcal{A}BC$, where $\mathcal{A}\in S_{A}$ is the
$A$-normal point of $\triangle UVW$.

\end{definition}

\begin{remark} Denote by $R_{A}$ the intersection of the ray
$x=1$, $x\ge 0$ and $S_{A}$. Points of $R_{A}$ correspond to right
angle triangles. Points to the right and left of $R_{A}$
correspond to, respectively, obtuse and acute triangles, see
Figure 6.

\begin{center}
    \epsfysize=60mm
    \epsfbox{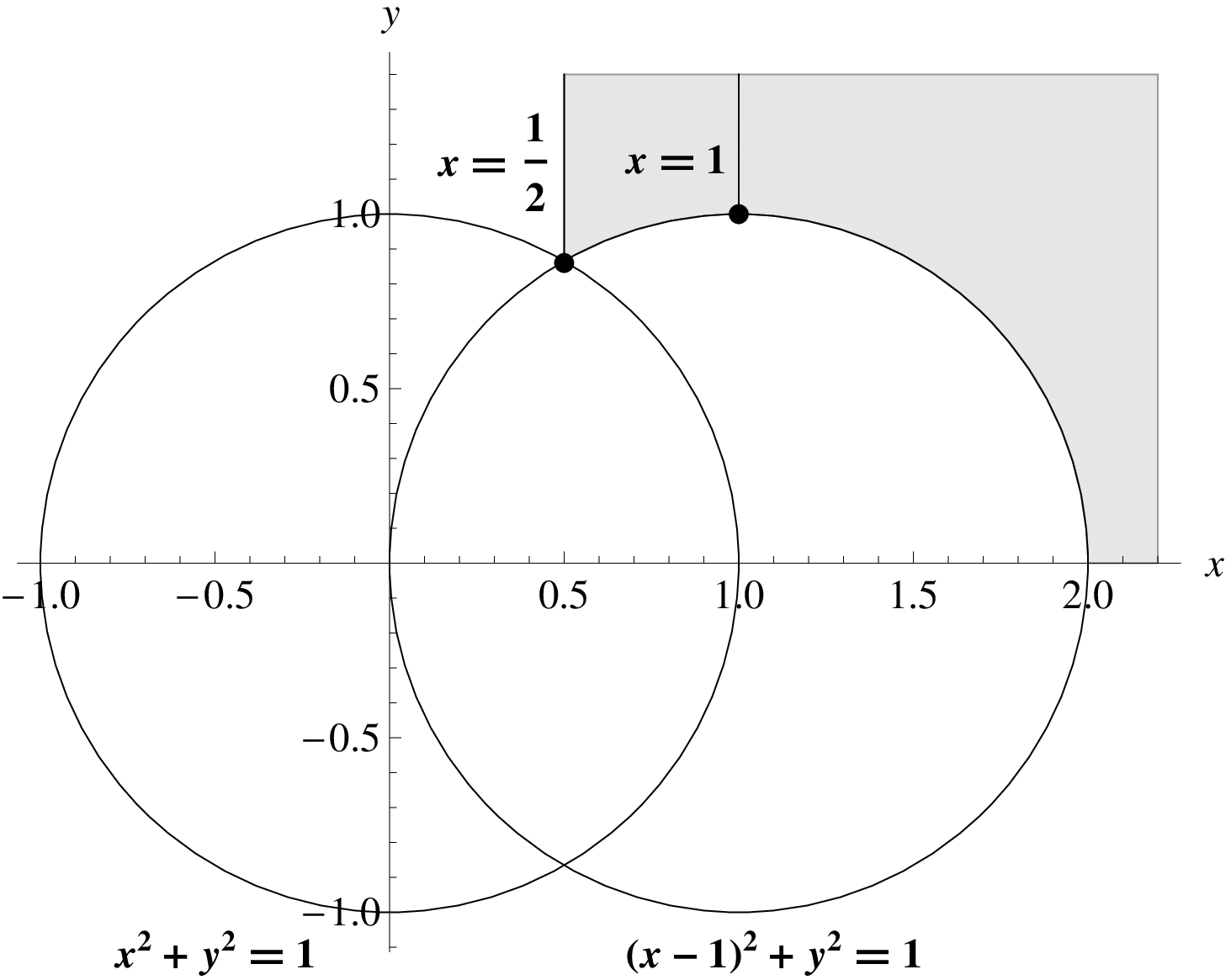}

Fig.6.  - the subdomains of $S_{A}$ corresponding to obtuse and
acute triangles.
    \end{center}

Points on the intersection of the line $x=\frac{1}{2}$ and $S_{A}$
correspond to isosceles acute triangles. Points on the
intersection of the circle $(x-1)^2+y^2=1$ and $S_{A}$ correspond
to isosceles obtuse triangles. Points in the interior of $S_{A}$
correspond to scalene triangles. The point
$(\frac{1}{2},\frac{\sqrt{3}}{2})$ corresponds to the equilateral
triangle. Points on the intersection of the line $y=0$ and $S_{A}$
correspond to degenerate triangles excluding the similarity type
with side lengths $0,c,c$, which corresponds to the point at
infinity. In contrast to the $C$-vertex and $B$-vertex normal
forms triangles for the $A$-vertex normal form are not bounded.

\end{remark}

\subsubsection{The circle normal form}

Consider $\mathbb{R}^{2}$ with a Cartesian system of coordinates
$(x,y)$ and center $O$.  We also consider polar coordinates
$[r,\varphi]$ introduced in the standard way: the polar angle
$\varphi$ is measured from the positive $x$-axis going
counterclockwise.

Note that angles $\alpha,\beta,\gamma$ of a nondegenerate triangle
such that $\alpha\le \beta\le \gamma$ satisfy the system of
inequalities
$$
\left\{%
\begin{array}{ll}
    0<\alpha\le \frac{\pi}{3},\\
    \alpha\le \beta\le \frac{\pi-\alpha}{2}.\\
\end{array}%
\right.
$$

Similarity types of nondegenerate triangles are parametrized by
one point in the domain in $(\alpha,\beta)$-plane determined by
the system
$$
\left\{%
\begin{array}{ll}
    \alpha>0,\\
    \beta\ge\alpha,\\
    \beta\le\frac{\pi}{2}-\frac{\alpha}{2}.\\
\end{array}%
\right.
$$

See Fig.7.

\begin{center}
    \epsfysize=50mm
    \epsfbox{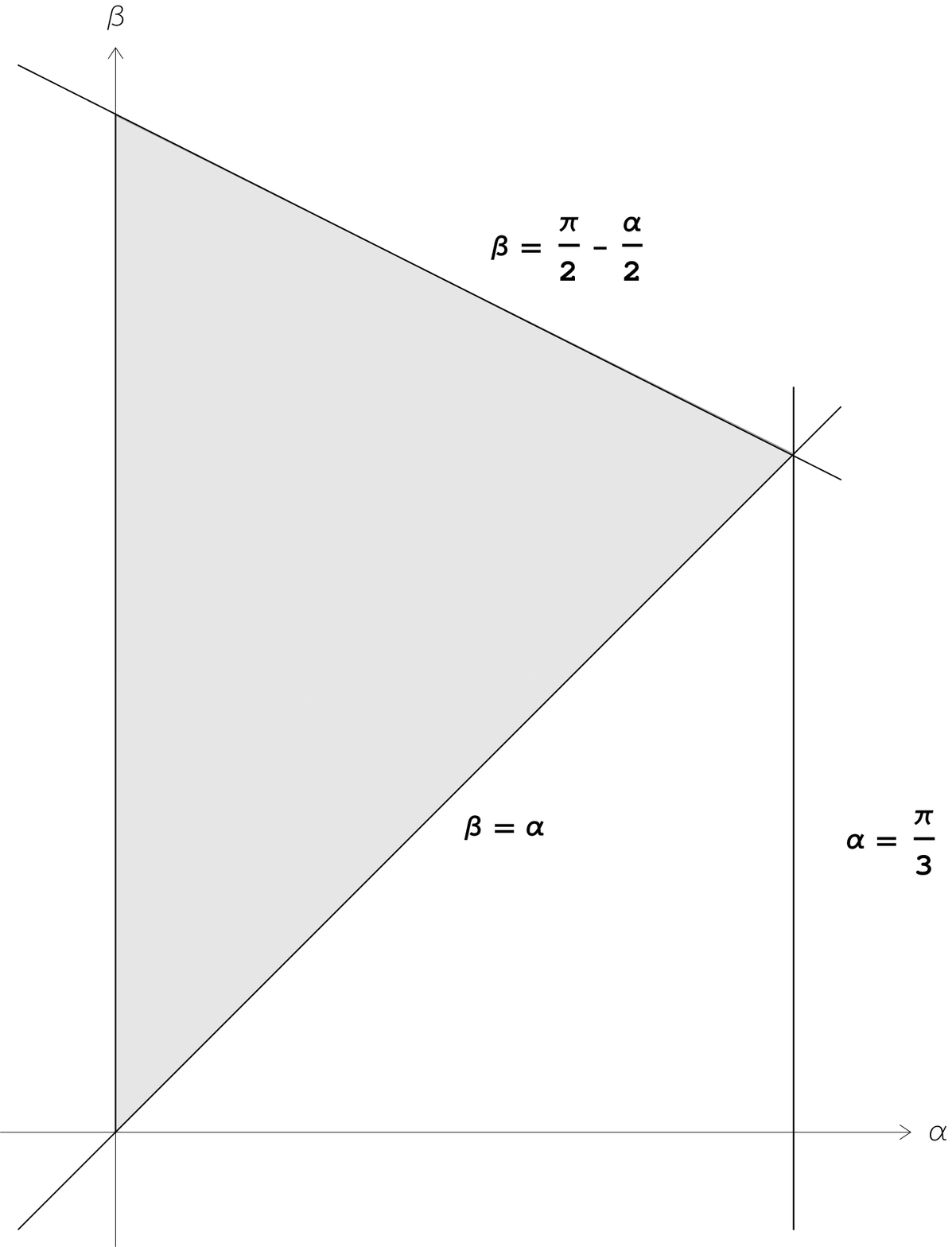}

Fig.7.  - parametrization of similarity types by $(\alpha,\beta)$.
    \end{center}

For the normal form described in this subsection the vertex with
the biggest angle will be fixed at $(1,0)$, to be consistent with
previous notations we define $C=(1,0)$. For this normal form only
nondegenerate triangles are considered.

In this case normal form triangles are inscribed in the unit
circle $\mathbb{U}=\{x^2+y^2=1\}$ having $C$ as one of the
vertices.

\begin{definition} A triangle $\triangle ABC$ inscribed in $\mathbb{U}$ is called \sl normal circle
triangle\rm\ if

\begin{enumerate}

\item $0\le \alpha\le \frac{\pi}{3}$,

\item $\alpha\le \beta\le \frac{\pi}{2}-\frac{\alpha}{2}$,

\item $C=(1,0)$,

\item the point $A$ is above $x$-axis,

\item the point $B$ is below $x$-axis.
\end{enumerate}

\end{definition}

\begin{remark} For a normal triangle $\triangle ABC$ we have that
$\alpha\le \beta \le \gamma$.

\end{remark}

\begin{remark} A normal triangle with angles $\alpha\le \beta\le \gamma$ can be constructed in the
following way:

\begin{enumerate}

\item choose a point $B$ below the $y$-axis with the argument
equal to $2\alpha$, where $0\le 2\alpha\le \frac{2\pi}{3}$;

\item draw the bisector of $\angle BOC$, denote the intersection
of this bisector with the arc $BC$ having angle $2\pi-2\alpha$ by
$D$;

\item find the point $\widetilde{B}$ which is symmetric to $B$
with respect to the $x$-axis;

\item  choose a point $A$ in the shorter arc $\widetilde{B}D$.

\end{enumerate}

See Figure 8.
\begin{center}
    \epsfysize=70mm
    \epsfbox{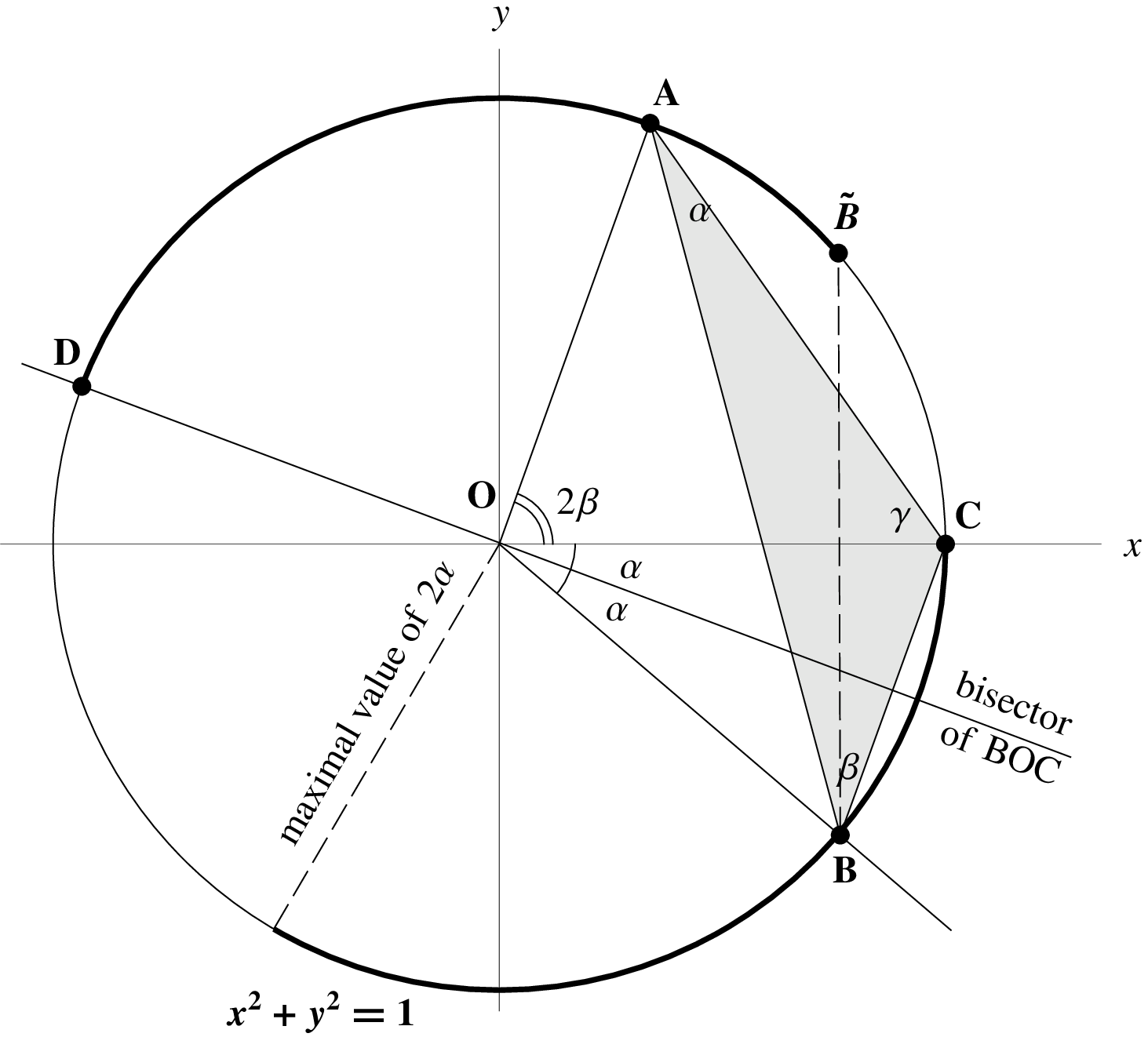}

Fig.8.  - construction of a normal circle triangle.
    \end{center}

\end{remark}

\begin{theorem} For every nondegenerate triangle $\triangle UVW$ there exists a
normal circle triangle $\triangle \mathcal{A}\mathcal{B}C$ such
that $\triangle UVW\sim \triangle \mathcal{AB}C$.

\end{theorem}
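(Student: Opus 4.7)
The plan is to give an explicit construction of the desired inscribed triangle using the inscribed angle theorem, after reducing to the case of sorted angles. Let $\triangle UVW$ have angles $\alpha\le\beta\le\gamma$, so (using the inequalities already noted) $0<\alpha\le\pi/3$ and $\alpha\le\beta\le(\pi-\alpha)/2$, and $\gamma=\pi-\alpha-\beta$. Following the construction sketched in Remark 35, I define
\[
C=(1,0),\qquad \mathcal{B}=(\cos 2\alpha,\,-\sin 2\alpha),\qquad \mathcal{A}=(\cos 2\beta,\,\sin 2\beta),
\]
i.e., place $\mathcal{B}$ on $\mathbb{U}$ at polar angle $-2\alpha$ (the role of the initial point in the remark) and $\mathcal{A}$ on $\mathbb{U}$ at polar angle $+2\beta$ (the free choice in the arc $\widetilde{B}D$). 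All three points lie on $\mathbb{U}$ by construction.

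Next I would check the five defining conditions of a normal circle triangle. Conditions 1 and 2 on $\alpha,\beta$ hold by our choice of ordering, and $C=(1,0)$ by construction. For condition 4, I would observe that nondegeneracy gives $\alpha>0$, hence $\beta\le(\pi-\alpha)/2<\pi/2$, so $0<2\beta<\pi$ and $\sin 2\beta>0$, placing $\mathcal{A}$ strictly above the $x$-axis. For condition 5, the bound $0<2\alpha\le 2\pi/3<\pi$ gives $\sin 2\alpha>0$, so $\mathcal{B}$ is strictly below the $x$-axis. These are the only range checks of substance, and I expect them to be the most delicate point of the proof (though elementary): one has to use the nondegeneracy assumption in exactly the right place to exclude $\alpha=0$ and to get strict inequality for $\beta<\pi/2$.

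Finally I would verify that $\triangle\mathcal{A}\mathcal{B}C$ has the same angles as $\triangle UVW$ via the inscribed angle theorem. The chord $\mathcal{B}C$ subtends a minor arc of central measure $2\alpha$ (below the $x$-axis); since $\mathcal{A}$ lies on the complementary arc, $\angle \mathcal{B}\mathcal{A}C=\alpha$. Similarly, the chord $\mathcal{A}C$ subtends a minor arc of central measure $2\beta$ (above the $x$-axis), and $\mathcal{B}$ lies on the complementary arc, so $\angle \mathcal{A}\mathcal{B}C=\beta$. By the angle sum, $\angle \mathcal{A}C\mathcal{B}=\pi-\alpha-\beta=\gamma$. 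Thus $\triangle UVW$ and $\triangle\mathcal{A}\mathcal{B}C$ have the same triples of angles, hence are similar by AAA, completing the proof. The whole argument is essentially an explicit construction plus inscribed-angle bookkeeping; the main thing to get right is the placement of $\mathcal{A}$ and $\mathcal{B}$ on opposite sides of the $x$-axis so that each sees its opposite chord from the major arc.
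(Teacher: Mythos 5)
Your proposal is correct and follows essentially the same construction as the paper: placing $\mathcal{B}$ at polar angle $-2\alpha$ and $\mathcal{A}$ at polar angle $2\beta$ on $\mathbb{U}$ and invoking the inscribed angle theorem. Your version simply spells out the range checks (conditions 4 and 5 and the strict inequalities coming from nondegeneracy) that the paper leaves implicit.
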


\begin{proof} Suppose $\triangle UVW$ has angles $\alpha\le \beta\le \gamma$.
Let $\mathcal{B}\in \mathbb{U}$ be the point with polar
coordinates $[1,-2\alpha]$. Let $\mathcal{A}\in \mathbb{U}$ be the
point with polar coordinates $[1,2\beta]$. Then since $\triangle
\mathcal{AB}C$ is inscribed in $\mathbb{U}$ we have that $\angle
\mathcal{BA}C=\alpha$, $\angle \mathcal{AB}C=\beta$ and thus
$\triangle \mathcal{AB}C\sim \triangle UVW$.

\end{proof}

\begin{theorem} Let $\triangle A_{1}BC_{1}$ and $A_{2}BC_{2}$ be
two distinct normal circle triangles: $A_{1}\neq A_{2}$ or
$B_{1}\neq B_{2}$. Then $\triangle A_{1}BC_{1}\not\sim\triangle
A_{2}BC_{2}$.

\end{theorem}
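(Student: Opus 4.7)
The plan is to show that in any normal circle triangle $\triangle ABC$ (with $C=(1,0)$), the polar coordinates of the vertices $A$ and $B$ are completely determined by the angles $\alpha=\angle BAC$ and $\beta=\angle ABC$. Once this is established, two distinct normal circle triangles (sharing $C$) must give different pairs $(\alpha,\beta)$; since the sorted triple of angles is a similarity invariant, the two triangles cannot be similar.

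First I would invoke the inscribed angle theorem: since $\triangle ABC$ is inscribed in the unit circle $\mathbb{U}$, the arc $BC$ not containing $A$ has central angle $2\alpha$, and the arc $AC$ not containing $B$ has central angle $2\beta$. Combined with $C=(1,0)$, the condition that $B$ lies below the $x$-axis forces $B$ to have polar coordinates $[1,-2\alpha]$, and the condition that $A$ lies above the $x$-axis forces $A$ to have polar coordinates $[1,2\beta]$. In particular, the map from the pair $(\alpha,\beta)$ to the pair $(A,B)$ is injective.

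Now suppose, for contradiction, that two distinct normal circle triangles $\triangle A_{1}B_{1}C$ and $\triangle A_{2}B_{2}C$ are similar. By the preceding remark, in a normal circle triangle the angles satisfy $\alpha\le\beta\le\gamma$. Since similar triangles have the same multiset of angles, their sorted triples coincide, so $\alpha_{1}=\alpha_{2}$ and $\beta_{1}=\beta_{2}$. The first step then yields $A_{1}=A_{2}$ and $B_{1}=B_{2}$, contradicting the assumption that the two triangles are distinct.

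The main obstacle I anticipate is the bookkeeping in the inscribed angle step: one must check that the angular ranges $0\le 2\alpha\le 2\pi/3$ and $\alpha\le\beta\le \pi/2-\alpha/2$ guaranteed by the definition are consistent with placing $B$ at polar angle $-2\alpha$ (the arc of length $2\alpha$ from $C$ going clockwise) and $A$ at polar angle $2\beta$, rather than on the complementary arcs; equivalently, one must verify that with these placements $A$ indeed lies above and $B$ below the $x$-axis, which is what matches the chosen arc of the inscribed angle theorem to the signed polar angle. Once this identification is pinned down, the rest of the argument is immediate.
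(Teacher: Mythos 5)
Your proposal is correct and follows essentially the same route as the paper: both rest on the inscribed-angle correspondence $B=[1,-2\alpha]$, $A=[1,2\beta]$ together with the remark that a normal circle triangle has $\alpha\le\beta\le\gamma$, so that the sorted angles are similarity invariants which in turn determine the vertex positions. If anything, your write-up is the more carefully stated one: you correctly record that $B$'s position determines $\alpha$ and $A$'s position determines $\beta$, whereas the paper's case split asserts that $A_{1}\neq A_{2}$ forces $\angle B_{1}A_{1}C\neq\angle B_{2}A_{2}C$ — an angle actually fixed by the position of $B_{i}$, not $A_{i}$ — so the paper's cases have the roles of the two vertices interchanged.
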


\begin{proof} If $A_{1}\neq A_{2}$, then $\angle B_{1}A_{1}C\neq \angle
B_{2}A_{2}C$. The angle $B_{i}A_{i}C$ is the smallest angle of
$\triangle A_{i}B_{i}C$. We have that $\angle B_{1}A_{1}C\neq
\angle B_{2}A_{2}C$ implies $\triangle
A_{1}B_{1}C\not\sim\triangle A_{2}B_{2}C$.

If $B_{1}\neq B_{2}$ and $A_{1}=A_{2}$, then $\angle
A_{1}CB_{1}\neq \angle A_{2}CB_{2}$. The angle $A_{i}CB_{i}$ is
the largest angle of $\triangle A_{i}B_{i}C$. We have that $\angle
A_{1}CB_{1}\neq \angle A_{2}CB_{2}$ in this case implies
$\triangle A_{1}B_{1}C\not\sim\triangle A_{2}B_{2}C$.
\end{proof}

\begin{remark} The only isosceles normal triangles are normal triangles of type $\triangle B\widetilde{B}C$ and $\triangle
BDC$. Right normal triangles are normal triangles with $AB$
passing through $O$. Acute/obtuse normal triangles as normal
triangles with $O$ inside/outside $\triangle ABC$. In contrast to
the one vertex normal forms triangles for the circle normal form
are unbounded from below.

\end{remark}

\begin{remark} Other normal forms of this type can be designed
choosing another point instead of $(1,0)$ and rearranging triangle
points.

\end{remark}

\subsubsection{Conversions}\

\begin{definition} Given a triangle with side lengths $a,b,c$
define $N_{X}(a,b,c)$ to be the Cartesian plane coordinates of the
$X$-normal point ($X\in \{A,B,C\}$) corresponding to this
triangle. Note that $N_{X}$ is a symmetric function. We can also
think of arguments of $N_{X}$ as multisets and think that
$N_{X}(a,b,c)=N_{X}(L)$, where $L$ is the multiset
$\{\{a,b,c\}\}$.

\end{definition}

\begin{proposition} Let $\triangle ABC$ has side lengths $a\le b\le c$.

Then

\begin{enumerate}

\item $N_{C}(a,b,c)=\Big(\frac{-a^2+b^2+c^2}{2c^2},
\frac{\sqrt{-a^4-b^4-c^4+2(a^2b^2+a^2c^2+b^2c^2)}}{2c^2}\Big)$;

\item $N_{B}(a,b,c)=\Big(\frac{-a^2+b^2+c^2}{2b^2},
\frac{\sqrt{-a^4-b^4-c^4+2(a^2b^2+a^2c^2+b^2c^2)}}{2b^2}\Big)$;

\item $N_{A}(a,b,c)=\Big(\frac{a^2-b^2+c^2}{2a^2},
\frac{\sqrt{-a^4-b^4-c^4+2(a^2b^2+a^2c^2+b^2c^2)}}{2a^2}\Big)$.

\end{enumerate}

\end{proposition}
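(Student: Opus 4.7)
The plan is to compute each of the three normal points directly by unwinding the construction given in the earlier existence theorems. In each case, the two fixed vertices lie on the $x$-axis at $(0,0)$ and $(1,0)$, and the unknown normal vertex $\mathcal{X}=(x,y)$ is pinned down by two distance conditions that translate into a pair of circle equations. From these, $x$ follows from a single linear equation obtained by subtracting, and $y$ from substitution.

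I would begin with $N_C$. After the translation/rotation/dilation described in the existence proof for the $C$-vertex form, the normalized triangle has vertices $A=(0,0)$, $B=(1,0)$, and $\mathcal{C}$ with $|A\mathcal{C}|=b/c$ (the image of side $AC$ of length $b$) and $|B\mathcal{C}|=a/c$ (the image of side $BC$ of length $a$). The two equations
\begin{equation*}
x^{2}+y^{2}=\frac{b^{2}}{c^{2}},\qquad (x-1)^{2}+y^{2}=\frac{a^{2}}{c^{2}}
\end{equation*}
subtract to give $2x-1=(b^{2}-a^{2})/c^{2}$, i.e.\ $x=(-a^{2}+b^{2}+c^{2})/(2c^{2})$. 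Because $b\ge a$ this $x$ automatically satisfies $x\ge \tfrac{1}{2}$, so the reflection step of the construction is not needed. Substituting back and taking the positive square root (which corresponds to the upper half-plane in the construction) yields
\begin{equation*}
y^{2}=\frac{4b^{2}c^{2}-(-a^{2}+b^{2}+c^{2})^{2}}{4c^{4}}.
\end{equation*}
The main routine step is then to recognize the numerator as the standard Heron-type identity
\begin{equation*}
4b^{2}c^{2}-(-a^{2}+b^{2}+c^{2})^{2}=-a^{4}-b^{4}-c^{4}+2(a^{2}b^{2}+a^{2}c^{2}+b^{2}c^{2}),
\end{equation*}
which one can verify by direct expansion (or by factoring both sides as $(a+b+c)(-a+b+c)(a-b+c)(a+b-c)$). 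This yields the stated formula for $N_{C}$.

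For $N_{B}$ I would repeat the same template with dilation factor $1/b$: the normalized triangle has $A=(0,0)$, $C=(1,0)$, and $|A\mathcal{B}|=c/b$, $|C\mathcal{B}|=a/b$. Subtracting the two circle equations gives $x=(-a^{2}+b^{2}+c^{2})/(2b^{2})$ (the algebraic form of the $x$-coordinate is identical to the $C$-case except that the normalization $c^{2}$ is replaced by $b^{2}$), and the same Heron identity produces the $y$-coordinate. Analogously, for $N_{A}$ the dilation is by $1/a$ with $B=(0,0)$, $C=(1,0)$, $|B\mathcal{A}|=c/a$, $|C\mathcal{A}|=b/a$; subtraction now yields $x=(a^{2}-b^{2}+c^{2})/(2a^{2})$, and the Heron identity in the form $4a^{2}c^{2}-(a^{2}-b^{2}+c^{2})^{2}=-a^{4}-b^{4}-c^{4}+2(a^{2}b^{2}+a^{2}c^{2}+b^{2}c^{2})$ supplies $y$.

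No step is genuinely difficult; the only point requiring minor care is the bookkeeping that matches each side length to the correct circle radius (so that the resulting point lies in $S_{A}$, $S_{B}$, or $S_{C}$ without an extra reflection about $x=\tfrac{1}{2}$), and the algebraic simplification of the numerator of $y^{2}$ into the symmetric Heron form. Since $N_{X}$ is defined as the position of the third vertex after the construction, once $x$ and $y$ are computed and both the domain constraint and the sign of $y$ are checked, the proposition is established.
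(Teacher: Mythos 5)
Your proposal is correct and follows essentially the same route as the paper: place the two fixed vertices on the $x$-axis, write the two distance conditions as circle equations, solve the linear equation obtained by subtraction for $x$, and simplify $y^{2}$ via the Heron-type identity (the paper merely performs the dilation by $1/c$ at the end rather than at the start). Your added checks that $b\ge a$ and $c\ge b$ force the point into the correct domain without a further reflection are a small, welcome supplement the paper leaves implicit.
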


\begin{proof}
1. Translate, rotate and reflect $\triangle ABC$ so that
$A=(0,0)$, $B=(c,0)$ and $C=(x,y)$ is in the first quadrant. For
$(x,y)$ we have the system $$\left\{%
\begin{array}{ll}
    x^2+y^2=b^2\\
    (c-x)^2+y^2=a^2\\
\end{array}%
\right.    $$

and find $$\left\{%
\begin{array}{ll}
    x=\frac{-a^2+b^2+c^2}{2c}\\
    y=\frac{\sqrt{-a^4-b^4-c^4+2(a^2b^2+a^2c^2+b^2c^2)}}{2c}\\
\end{array}%
\right.    $$

After the dilation by coefficient $\frac{1}{c}$ we get the given
formula.

 2. and 3. proved are in a similar way.
\end{proof}

\begin{proposition}

Let a triangle $T$ have angles $\alpha\le\beta\le\gamma$.

Then
\begin{enumerate}
\item its $C$-normal point is $N_{C}(\frac{\sin\alpha}{\sin
\gamma},\frac{\sin\beta}{\sin\gamma},1)$;

\item if $T$ has the $C$-normal point $(x,y)$, then it has angles
$\alpha=\arctan{\frac{y}{x}}$, $\beta=\arctan{\frac{y}{1-x}}$,
$\gamma=\pi-\arctan{\frac{y}{x}}-\arctan{\frac{y}{1-x}}$.

\item its $B$-normal point is $N_{B}(\frac{\sin\alpha}{\sin
\beta},\frac{\sin\gamma}{\sin\beta},1)$;

\item if $T$ has the $B$-normal point $(x,y)$, then it has angles
$\alpha=\arctan\frac{y}{x}$,
$\beta=-\arctan\frac{y}{x}+\arctan\frac{y}{x-1}$,
$\gamma=\pi-\arctan\frac{y}{x-1}$;

\item its $A$-normal point is $N_{A}(\frac{\sin\beta}{\sin
\alpha},\frac{\sin\gamma}{\sin\alpha},1)$;

\item if $T$ has the $A$-normal point $(x,y)$, then it has angles
$\alpha=-\arctan\frac{y}{x}+\arctan\frac{y}{x-1}$,
$\beta=\arctan\frac{y}{x}$, $\gamma=\pi-\arctan\frac{y}{x-1}$.
\end{enumerate}
\end{proposition}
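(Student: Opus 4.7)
The plan is to split the six parts of the proposition into two groups of three: the forward formulas (parts 1, 3, 5), which express a normal point in terms of the angles, and the inverse formulas (parts 2, 4, 6), which recover the angles from the coordinates of the normal point. The former reduce to one line via the law of sines; the latter follow from elementary coordinate trigonometry applied at the fixed vertices of the corresponding normal form.

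For the forward direction I begin from the law of sines, writing $a:b:c=\sin\alpha:\sin\beta:\sin\gamma$, together with the fact that $N_X$ depends only on the similarity class, so that the argument triple may be scaled by any positive factor. For part 1 I scale so that $c=1$, obtaining the triple $(\sin\alpha/\sin\gamma,\sin\beta/\sin\gamma,1)$; feeding this into the formula for $N_C$ from the preceding proposition produces the stated expression verbatim. Parts 3 and 5 are identical after scaling so that $b=1$ or $a=1$ respectively, and substituting into $N_B$ or $N_A$.

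For the inverse direction I use the explicit coordinates of the three vertices. Take the $C$-vertex case: with $A=(0,0)$, $B=(1,0)$ and $C=(x,y)\in S_C$, the angle $\alpha$ at $A$ is the angle from the positive $x$-axis to the ray $AC$, equal to $\arctan(y/x)$ since $x>0$; the angle $\beta$ at $B$ is the angle between the ray $BA$ (the negative $x$-direction) and the ray $BC$, equal to $\arctan(y/(1-x))$ since $1-x\ge 0$ on $S_C$; then $\gamma=\pi-\alpha-\beta$ by the angle-sum identity. The $B$- and $A$-vertex cases proceed by the same routine, applied at the two fixed base-vertices of each form.

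The main obstacle I foresee is purely notational: in the $B$- and $A$-vertex forms the variable vertex may lie in the strip $x>1$, so the interior angle at the base-vertex $(1,0)$ is obtuse, whereas for $x<1$ it is acute. The formulas in parts 4 and 6 handle both regimes by writing that angle as $\pi-\arctan(y/(x-1))$, which is cleanest if $\arctan$ here is read as the two-argument $\mathrm{atan2}(y,x-1)$, so a single expression covers both cases. Once this branch issue is verified on a short case-split, the remaining middle angle in parts 4 and 6 drops out of the angle-sum identity $\alpha+\beta+\gamma=\pi$, and no further computation is required.
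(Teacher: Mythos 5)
Your proposal is correct and follows essentially the same route as the paper: the law of sines plus scale-invariance of $N_X$ for parts 1, 3, 5, and elementary right-triangle trigonometry at the base vertices (the paper phrases this via the height $CD$, you via coordinates, but it is the same computation) for parts 2, 4, 6. Your remark that in parts 4 and 6 the angle at $(1,0)$ can be obtuse when $x>1$, so $\arctan\frac{y}{x-1}$ must be read with the appropriate branch (atan2), is a genuine point that the paper's one-line ``proved similarly'' passes over.
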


\begin{proof}

1. Let $\triangle ABC$ be the $C$-normal triangle with angles
$\alpha\le \beta \le \gamma$, i.e. $|AB|=1$. By the law of sines
we have  $b=|AC|=\frac{\sin \beta}{\sin \gamma}$ and $a=\frac{\sin
\alpha}{\sin\gamma}$. By definition $C$ has coordinates
$N_{C}(\frac{\sin\alpha}{\sin\gamma},\frac{\sin\beta}{\sin\gamma},1)$.

2. Let $CD$ be a height of $\triangle ABC$. Formulas for angles
are obtained considering $\triangle ACD$ and $\triangle BCD$.

3.,4.,5.,6. ar proved similarly.
\end{proof}

\subsection{Normal forms of quadrilaterals}

In this subsection we consider mutisets of $4$ points in a plane.
A multiset of $4$ points can be interpreted as a quadrilateral. We
exclude the case of one point of multiplicity $4$. The multiset
$Q=\{\{X,Y,Z,T\}\}$ is also denoted as $\Box XYZT$. We define
$Q_{1}\sim Q_{2}$ provided there is an element of the dilation
group $g$ such that $g(Q_{1})=Q_{2}$.

A set of $4$ points defines a set of $6$ distances between these
points. Choosing any two points we can translate, rotate, reflect
and dilate the given $4$-point configuration so that the chosen
two points have coordinates $A=(0,0)$ and $B=(1,0)$. Different
normal forms can be obtained choosing pairs with different
relative metric properties. In this paper we consider only the
simplest case - two points having the maximal distance are mapped
to the $x$-axis.

\subsubsection{Longest distance normal form} Suppose we are given  a quadrilateral $\Box XYZT$ such that $|XY|\ge |XZ|$,
$|XY|\ge |XT|$, $|XY|\ge |YZ|$, $|XY|\ge |YT|$, $|XY|\ge |ZT|$. We
map $X$ and $Y$ by a dilation to the $x$-axis (to $A=(0,0)$ and
$B=(1,0)$) and determine what are positions of the $2$ remaining
vertices $C$ and $D$ so that $\Box XYZT \sim \Box ABCD$.

%

\begin{definition} Let $p\in \mathbb{R}^2$.  The mapping of $p$ by reflections of $p$ with respect to
the $x$-axis and the line $x=\frac{1}{2}$ to the domain $y\ge 0$,
$x\ge \frac{1}{2}$ is denoted by $p_{s}$.

\begin{definition} Let $p, p'\in \mathbb{R}^2$. We say that
$p$ is \sl quasilexicographically\rm\ smaller or equal to $p'$,
denoted by $p\vartriangleleft p'$, provided $p_{s}\prec p'_{s}$ or
$p_{s}=p'_{s}$. Given two pairs $[p,q]$ and $[p',q']$ we define
$[p,q]\vartriangleleft [p',q']$ provided ($p_{s}\prec p'_{s}$) or
($p_{s}=p'_{s}$ and $q\vartriangleleft q'$).

\end{definition}

\end{definition}


\begin{definition}
Let $S_{D}(x_{0},y_{0})\subseteq \mathbb{R}^2$ with
$(x_{0},y_{0})\in S_{C}$ (for the definition of $S_{C}$ see
section  \ref{1}) be the set of solutions of the following system
of inequalities:

\begin{equation}\label{2}
\left\{%
\begin{array}{ll}
    x^2+y^2\le 1\\
    (x-1)^2+y^2\le 1\\
    (x-x_{0})^2+(y-y_{0})^2\le 1\\
    |x-\frac{1}{2}|\le |x_{0}-\frac{1}{2}|\\
    if\ |x-\frac{1}{2}|=|x_{0}-\frac{1}{2}|,then\ |y|\le |y_{0}|\\

\end{array}%
\right.
\end{equation}

 See Figure 9.

\begin{center}
    \epsfysize=70mm
    \epsfbox{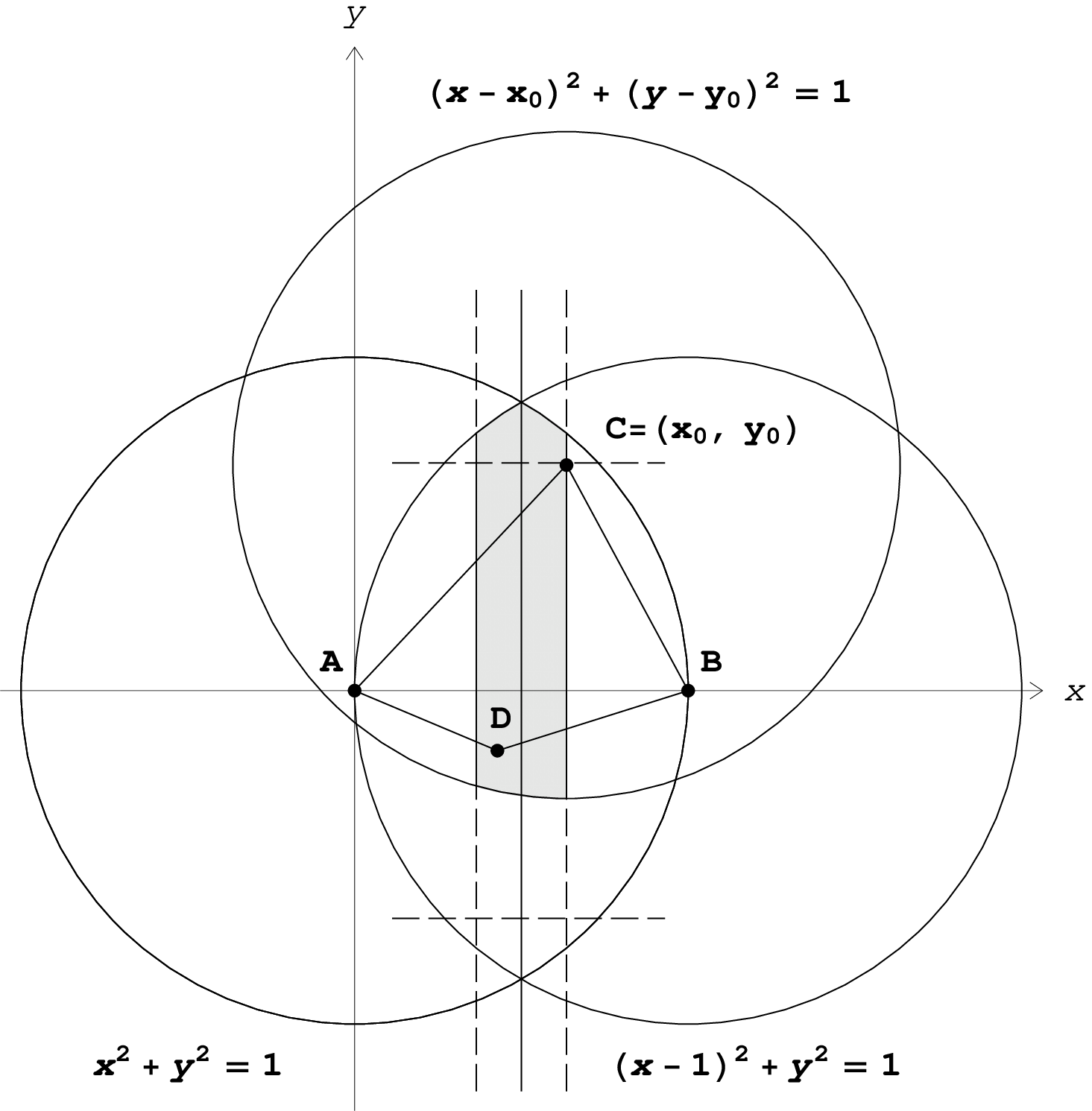}

Fig.9.  - example of the domain $S_{D}$.
    \end{center}
\end{definition}

\begin{remark} Conditions for $p\in S_{D}(x_{0},y_{0})$ consist of two
parts:
\begin{enumerate}
\item distance from $p$ to $A$, $B$ and $(x_{0},y_{0})$ is less
than or equal to $1$;

\item $p_{s}\vartriangleleft (x_{0},y_{0})$.
\end{enumerate}

\end{remark}

\begin{theorem}\label{3} Every $\Box UVWZ$  (including
multisets with multiplicities at most $3$) in $\mathbb{R}^{2}$ is
similar to $\Box AB\mathcal{CD}$, where $A=(0,0)$, $B=(1,0)$,
$\mathcal{C}\in S_{C}$ and $\mathcal{D}\in S_{D}$.
\end{theorem}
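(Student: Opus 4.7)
The plan is to adapt the construction used for the $C$-vertex normal form of triangles, with an added step in which the quasilexicographic order breaks the symmetry between the two vertices not sent to $A$ and $B$. First I would translate, rotate, reflect and dilate $\Box UVWZ$ so that the pair realizing the maximal pairwise distance (the pair $\{X,Y\}$ in the statement) is mapped to $A=(0,0)$ and $B=(1,0)$. Because $|XY|$ is the longest of the six pairwise distances among the four vertices, each of the images $P,Q$ of the remaining two vertices lies in the lens
\[
L=\{(x,y)\in\mathbb{R}^2:\,x^2+y^2\le 1\text{ and }(x-1)^2+y^2\le 1\},
\]
and moreover $|P-Q|\le 1$.

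Next I would observe that the set $\{A,B\}$ is preserved by the Klein four-group $V$ generated by the reflection about the $x$-axis and the reflection about the line $x=\tfrac{1}{2}$, that every element of $V$ maps $L$ onto itself, and that the normalization $p\mapsto p_{s}$ of the preceding definition is $V$-invariant (since $p_{s}$ depends only on $|y|$ and on $|x-\tfrac{1}{2}|$). Consequently, the quasilexicographic relation between $P$ and $Q$ is preserved by the simultaneous action of any $g\in V$ on both points.

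After relabeling the floating vertices so that $P\vartriangleleft Q$, let $g\in V$ be chosen so that $g(Q)=Q_{s}\in S_{C}$, put $\mathcal{C}:=g(Q)$ and $\mathcal{D}:=g(P)$. The first two inequalities defining $S_{D}(\mathcal{C})$ hold because $g(L)=L$; the third, $|\mathcal{D}-\mathcal{C}|\le 1$, holds because $|P-Q|\le 1$ and $g$ is an isometry; and the final two, which together encode $\mathcal{D}_{s}\vartriangleleft\mathcal{C}$, follow from the $V$-invariance of $p\mapsto p_{s}$ and the choice $P\vartriangleleft Q$. All transformations used lie in $IG(2)$, so similarity type is preserved and $\Box UVWZ\sim \Box AB\mathcal{C}\mathcal{D}$.

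The main obstacle is precisely the bookkeeping in the last step: we must guarantee that the same $g$ which carries one of the two floating vertices into the fundamental region $S_{C}$ simultaneously places the other into $S_{D}(\mathcal{C})$. This is why the definition of $S_{D}$ hard-codes the quasilexicographic constraint, and why the preliminary remark that $p\mapsto p_{s}$ commutes with the $V$-action on $\{A,B\}$ is essential. Degenerate cases in which the stabilizer of $Q$ in $V$ is nontrivial, in which $P$ and $Q$ lie in a common $V$-orbit, or in which some vertex of the original quadrilateral has higher multiplicity, cause no trouble because the inequalities defining $S_{D}$ are non-strict.
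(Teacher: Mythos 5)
Your proof is correct and follows essentially the same route as the paper's: normalize the longest pair to $A$ and $B$, note both remaining points lie in the lens $L$ with mutual distance at most $1$, and use reflections about the $x$-axis and $x=\tfrac12$ to send the quasilexicographically larger point into $S_{C}$ while the invariance of $p\mapsto p_{s}$ under these reflections forces the other point into $S_{D}$. Your Klein four-group packaging is a cleaner way of stating what the paper does case by case, and the only ingredient you omit --- the tie-breaking rule when several pairs realize the maximal distance --- matters only for the well-definedness of the normal form later, not for the existence claim of this theorem.
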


\begin{proof} Let $UVWZ$ be a multiset of points in $\mathbb{R}^{2}$ with at least two distinct elements.
Perform the following sequence of transformations:
\begin{enumerate}

\item translate and rotate the plane so that $2$ points with the
longest distance are on the $x$-axis, one vertex has coordinates
$(0,0)$ and another vertex has coordinates $(d,0)$, $d>0$; if
there is more than one possibility to choose two points with the
longest distance then choose this pair so that the remaining pair
is the largest in the quasilexicographic order;

\item do the dilation with coefficient $\frac{1}{d}$, note that
the vertices on the $x$-axis have coordinates $(0,0)$ and $(1,0)$,
suppose the the other two vertices have coordinates
$(x_{C},y_{C})$ and $(x_{D},y_{D})$;

\item if $|x_{C}-\frac{1}{2}|\ne |x_{D}-\frac{1}{2}|$, then put
the point with the maximal $|x-\frac{1}{2}|$ value into $S_{C}$ by
reflections with respect to the $x$-axis and the line
$x=\frac{1}{2}$;

\item if  $|x_{C}-\frac{1}{2}|=|x_{D}-\frac{1}{2}|$, then put the
point with the maximal value of $|y|$ into $S_{C}$ by reflections
with respect to the $x$-axis and the line $x=\frac{1}{2}$;

\item if $|x_{C}-\frac{1}{2}|=|x_{D}-\frac{1}{2}|$ and
$|y_{C}|=|y_{D}|$, then map any of the points into $S_{C}$.

\end{enumerate}

Denote the point which is mapped to $S_{C}$ by this sequence of
transformations by $C=(x_{C},y_{C})$ and the fourth point by
$D=(x_{D},y_{D})$. For any $C=(x_{0},y_{0})\in S_{C}$ we have that
$S_{D}(x_{0},y_{0})\neq \emptyset$.

We check that $D\in S_{D}(x_{C},y_{C})$. From conditions $|AD|\le
1, |BD|\le 1$, $|CD|\le 1$ it follows that $D$ satisfies the first
three inequalities of the system \ref{2}. If $|y_{D}|>|y_{C}|$,
then $|x_{D}-\frac{1}{2}|<|x_{C}-\frac{1}{2}|$ due to the
quasilexicographic order condition.
\end{proof}

\begin{definition} \sl The longest distance normal form\rm\ of $\Box UVWZ$
is $\Box ABCD$ with $A=(0,0)$, $B=(1,0)$, $C=(x_{C},y_{C})\in
S_{C}$ and $D\in S_{D}(x_{C},y_{C})$ constructed according to the
algorithm given in the proof of \ref{3}.
\end{definition}

\begin{proposition} Let $\Box ABC_{1}D_{1}$ and $\Box ABC_{2}D_{2}$ be two
quadrilaterals constructed according to the longest distance
normal form algorithm.

If $C_{1}\neq C_{2}$ or $D_{1}\neq D_{2}$, then $\Box ABC_{1}D_{1}
\not\sim \Box ABC_{2}D_{2}$.

\end{proposition}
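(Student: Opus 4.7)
The plan is to prove the contrapositive: assuming $\Box ABC_1D_1\sim\Box ABC_2D_2$ via some $g\in IG(2)$, I will show $C_1=C_2$ and $D_1=D_2$. First I would observe that $|AB|=1$ is the diameter of the point set in both normal forms. This follows immediately from the defining inequalities of $S_C$, which give $|AC_i|,|BC_i|\le 1$, and the first three inequalities defining $S_D(C_i)$, which give $|AD_i|,|BD_i|,|C_iD_i|\le 1$. Since $g$ scales all pairwise distances by a common positive factor, it must send a diameter-realising pair to a diameter-realising pair.

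In the generic case where $\{A,B\}$ is the \emph{unique} diameter pair in both configurations, $g$ must fix $\{A,B\}$ setwise and the scale factor equals $1$, so $g$ is an isometry. The isometries of $\mathbb{R}^2$ stabilising $\{(0,0),(1,0)\}$ form a Klein four-group $K$ generated by reflection $r_x$ in the $x$-axis and reflection $r_\ell$ in the line $x=\frac{1}{2}$. If $C_1$ lies in the interior of $S_C$, each non-identity element of $K$ sends $C_1$ strictly outside $S_C$, forcing $g$ to be the identity and hence $C_1=C_2$, $D_1=D_2$. When $C_1$ lies on a boundary line of $S_C$ and some nontrivial element of $K$ fixes $C_1$, the last two inequalities of system~(\ref{2}), together with the tie-breaking step~5 of the algorithm, are designed precisely to single out one canonical representative of the $K$-orbit of $D_1$ inside $S_D(C_1)$; this forces $D_1=D_2$ as well.

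The hard part is the degenerate case in which several pairs in $\{A,B,C_1,D_1\}$ realise the diameter, for instance when $|C_1D_1|=1$, so that $g$ could a priori send $\{A,B\}$ to $\{C_1,D_1\}$. To handle it I would invoke step~1 of the algorithm, which, among the competing diameter pairs, selects the one whose complementary pair is largest in the quasilexicographic order $\vartriangleleft$. This selection rule depends only on the similarity class of the multiset, so $g$ carries the algorithm's preferred pair in $\Box ABC_1D_1$ to the preferred pair in $\Box ABC_2D_2$, which is $\{A,B\}$ in both quadrilaterals by construction. This reduces the tied case back to the generic one analysed above and completes the argument.
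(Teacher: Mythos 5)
Your overall architecture is sound and, in fact, supplies the justifications that the paper's own two-sentence proof merely asserts: the paper claims without argument that any similarity must carry $C_{1}$ to $C_{2}$, and your diameter-pair argument together with the analysis of the stabilizer $K$ of $\{A,B\}$ is exactly what is needed to back that claim up. One small unaddressed point in your ``generic'' case: a $g\in K$ sends $\{C_{1},D_{1}\}$ to $\{C_{2},D_{2}\}$ but could a priori send $C_{1}$ to $D_{2}$ rather than to $C_{2}$; this is excluded because every element of $K$ preserves the map $p\mapsto p_{s}$, so $(D_{2})_{s}=(C_{1})_{s}=C_{1}$, which combined with $D_{i}\vartriangleleft C_{i}$ forces $C_{1}=C_{2}$ anyway. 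That is repairable.

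The genuine gap is the boundary case, which you dispose of with the unverified assertion that the last two inequalities of system~(\ref{2}) plus step~5 of the algorithm ``single out one canonical representative of the $K$-orbit of $D_{1}$.'' They do not. Take $C=(0.9,\,0)\in S_{C}$, so that $r_{x}$ (reflection in the $x$-axis) fixes $A$, $B$ and $C$, and take $D=(\tfrac{1}{2},\,0.3)$. Then $|AD|=|BD|=\sqrt{0.34}$, $|CD|=0.5$, $|AC|=0.9$, $|BC|=0.1$, so $AB$ is the unique longest distance; moreover $|x_{D}-\tfrac{1}{2}|=0<0.4=|x_{C}-\tfrac{1}{2}|$, so the conditional fifth inequality of (\ref{2}) never fires, and \emph{both} $D$ and $r_{x}(D)=(\tfrac{1}{2},\,-0.3)$ satisfy every condition for membership in $S_{D}(C)$. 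Hence $\Box ABCD$ and $\Box ABC\,r_{x}(D)$ are both legitimate outputs of the algorithm, they are congruent via $r_{x}$, yet their fourth vertices differ — contradicting the proposition. This is not a detail you could have filled in: the statement fails as written in this boundary case, and fixing it requires strengthening the definition of $S_{D}$ (e.g.\ demanding $y\ge 0$ whenever $y_{0}=0$, i.e.\ reducing $D$ by the full stabilizer of $C$ in $K$, not only by the conditions in (\ref{2})). For what it is worth, the paper's own proof founders on exactly the same rock: it asserts that if $A$, $B$, $C_{i}$ lie on the $x$-axis then $D_{i}$ must lie there too, which the definition of $S_{D}$ does not enforce.
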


\begin{proof} $D_{i} \vartriangleleft C_{i}$, therefore if $C_{1}\neq C_{2}$, then
$\Box ABC_{1}D_{1}\not\sim\Box ABC_{2}D_{2}$.

Suppose $C_{1}=C_{2}$ and $D_{1}\neq D_{2}$. Under any similarity
mapping $C_{1}$ must be mapped to $C_{2}$. If a similarity mapping
fixes three noncollinear points $A$, $B$ and $C_{i}$, then it must
fix any other point of the plane. If $A$, $B$ and $C_{i}$ are on
the $x$-axis, then $D_{i}$ must also be on the $x$-axis and must
be fixed. Therefore $D_{1}\neq D_{2}$ implies $\Box
ABC_{1}D_{1}\not\sim \Box ABC_{2}D_{2}$ in this case.
\end{proof}

\begin{remark} If $C_{s}=D_{s}$, then there
are the following possibilities for the number of similarity types
of quadrilaterals with a given $C\in S_{C}$: 1) $1$ similarity
type if $C=D$, 2) $2$ similarity types if $C\neq D$ and
$|AC|=|BC|$, or $C\neq D$ and $C$ belongs to the $x$-axis or 3)
$4$ similarity types in other cases.

\end{remark}

\section{Possible uses of normal forms in education}

One vertex normal forms of triangles can be used to represent all
similarity types of triangles in a single picture with all
triangles having a fixed side, especially $C$-vertex and
$B$-vertex normal forms. It may be useful to have an example for
students showing that similarity type of triangle can be
parametrized by coordinates of a single point. One vertex normal
forms can also be used in considering quadrilaterals.

The circle normal form of triangles may be useful teaching
properties of circumscribed circles, e.g. inscribing triangles
with given angles in a circle.

Normal forms of triangles can also be used to teach the idea of
normal (canonical) objects using a case of simple and popular
geometric constructions.

\section{Conclusion and further development} It is relatively easy to define several normal forms of
triangles up to similarity. Since the main purpose of this work is
contribution to mathematics education only simplest approaches
which may be used in teaching are considered in this paper. One
approach is to map one side to the $x$-axis and use dilations and
reflections to position the third vertex in a unique way, in this
approach normal triangles are parametrized by one vertex. This
approach can be generalized for quadrilaterals. Another approach
considered in this paper is to design normal triangles as
triangles inscribed in a unit circle. Further development in this
direction may be related to using other figures related to a given
triangle, for example, the inscribed circle, medians, altitudes or
bisectors.


\end{document}